\newcommand{\Z}{\mathbb{Z}}
\newcommand{\N}{\mathbb{N}}
\newcommand{\rightleftarrow}{\leftrightarrow}
\newtheorem{theorem}{Theorem}[section]
\newtheorem{prop}[theorem]{Proposition}
\newtheorem{lemma}[theorem]{Lemma}
\newtheorem{definition}[theorem]{Definition}
\newtheorem{claim}[theorem]{Claim}
\title{Expansion of Presburger arithmetic with the Exchange Property}
\author{Nathana\"el Mariaule\footnote{During the preparation of this paper the author was supported by the Fonds de la Recherche Scientifique - FNRS}} 
\date{}
\begin{document}
\maketitle
\begin{abstract}
Let $G$ be a model of Presburger arithmetic. Let $\mathcal{L}$ be an expansion of the language of Presburger $\mathcal{L}_{Pres}$. In this paper we prove that the $\mathcal{L}$-theory of $G$ is $\mathcal{L}_{Pres}$-minimal iff it has the exchange property and any bounded definable set has a maximum.
\end{abstract}
\section{Introduction}
Let $G$ be a model of Presburger arithmetic i.e., $G$ is a group elementary equivalent to $\Z$ (as ordered group i.e. in the language $\mathcal{L}_{Pres}=(+,-,0,1,<,\equiv_n (n\in \N))$ - where $\equiv_n$ is the congruence relation).
We fix $\mathcal{L}$ an expansion of $\mathcal{L}_{Pres}$ and an interpretation of the elements of $\mathcal{L}$ in $G$. We will consider the theory of $G$ as a $\mathcal{L}$-structure. 
\begin{definition} We say that $(G,\mathcal{L})$ is $\mathcal{L}_{Pres}$-minimal if for all $X\subset G$ definable, $X$ is $\mathcal{L}_{Pres}$-definable.  $Th(G,\mathcal{L})$ is $\mathcal{L}_{Pres}$-minimal if for all $\mathcal{M}\equiv G$ (as $\mathcal{L}$-structures), $(\mathcal{M},\mathcal{L})$ is $\mathcal{L}_{Pres}$-minimal.
\end{definition}
\begin{Remark}
All definable sets are definable with parameters.
\end{Remark}
\par It is known that $\mathcal{L}_{Pres}$-minimal theories have many good properties. For instance in \cite{Cluckers}, R. Cluckers proves a cell decomposition theorem for models of such theory. It follows that it admits a good notion of dimension and the exchange property.
\begin{definition} Let $\mathcal{M}=(M,\cdots)$ be a $\mathcal{L}$-structure. Let $A\subset M$. Then $dcl(A)$ is the set of $a\in M$ such that there is $\Phi(x,\overline{y})$ a $\mathcal{L}$-formula and $\overline{b}\subset A$ such that $\mathcal{M}\vDash \Phi(a,\overline{b})\wedge \forall y \Phi(y,\overline{b})\rightarrow y=a$. 
\end{definition}
\begin{definition} A theory $T$ satisfies EP (exchange property) if for all $\mathcal{M}\vDash T$, for all $A\subset M$, for all $a,b\in M$, 
$$a\in dcl(A\cup \{b\})\setminus dcl(A)\mbox{ implies } b\in dcl(A\cup\{a\})$$
\end{definition}
\par If $G=\Z$, then the theory $Th(G,\mathcal{L})$ satisfies EP if and only if it is $\mathcal{L}_{Pres}$-minimal. This follows easily from results of C. Michaux and R. Villemaire \cite{M-V}. In their paper the authors prove that  (1) $Th(\Z,\mathcal{L})$ is $\mathcal{L}_{Pres}$-minimal iff $(\Z,\mathcal{L})$ is $\mathcal{L}_{Pres}$-minimal and (2) that $(\Z,\mathcal{L})$ is $\mathcal{L}_{Pres}$-minimal iff there is no definable expanding set in $\Z$.
\begin{definition} Let $X\subset G$ definable. We say that $X$ is expanding if for all $n\in \N$ there are infinitely many $x\in X$ such that $x+1,\cdots x+n\notin X$.
\end{definition}
It is then not difficult to prove that the exchange property implies that there is no definable expanding set:

\begin{prop}\label{EP implies no expanding set} Let $G\equiv \Z$. If $T=Th(G,\mathcal{L})$ satisfies EP then for all $\mathcal{M}\vDash T$, for all $X\subset M$ definable, $X$ is not expanding.
\end{prop}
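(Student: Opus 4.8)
The plan is to argue by contradiction: assuming some model of $T$ carries a definable expanding set, I will exhibit elements violating EP. Suppose $X\subset M$ is definable and expanding in some $\mathcal{M}_0\vDash T$, and let $A$ be a finite tuple of parameters over which $X$ is defined. For each $n,m\in\N$ the assertion ``there are at least $m$ elements $x\in X$ with $x+1,\dots,x+n\notin X$'' is a single $\mathcal{L}(A)$-formula, and expandingness says all of these hold; hence they transfer to any elementary extension. So I may replace $\mathcal{M}_0$ by a $\kappa$-saturated elementary extension $\mathcal{M}$ with $\kappa>|\mathcal{L}|+|A|+\aleph_0$, in which the same formula still defines an expanding set $X$.

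Next I would produce a point of $X$ with an infinitely long gap to its right, lying outside $dcl(A)$. Consider the partial type $p(x)=\{x\in X\}\cup\{x+k\notin X : k\ge 1\}\cup\{x\neq t : t\in dcl(A)\}$. Every finite subset involves only finitely many of the conditions $x+k\notin X$, say up to $k=n$, together with finitely many inequalities; it is satisfiable because $Y_n:=\{x\in X : x+1,\dots,x+n\notin X\}$ is infinite while $|dcl(A)|<\kappa$. By saturation $p$ is realised by some $a\in X$ with $a\notin dcl(A)$ and $a+k\notin X$ for every standard $k\ge 1$.

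Now set $G_a:=\{y : y>a\wedge\forall z\,(a<z\le y\rightarrow z\notin X)\}$, an $A\cup\{a\}$-definable set containing every $a+k$ (standard $k$), hence infinite; since $|dcl(A\cup\{a\})|<\kappa$, I can choose $b\in G_a\setminus dcl(A\cup\{a\})$. The crucial observation is that $a$ is definable from $b$ over $A$ without any appeal to the existence of maxima: the formula $\Phi(x,b)$ saying $x\in X\wedge x\le b\wedge\forall z\,(x<z\le b\rightarrow z\notin X)$ is satisfied by $a$ (because $b\in G_a$), and it has a unique solution, for if $x_1<x_2$ both satisfied it then $x_2\in X$ with $x_1<x_2\le b$ would contradict the clause for $x_1$. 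Hence $a\in dcl(A\cup\{b\})$.

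Collecting the pieces, $a\in dcl(A\cup\{b\})\setminus dcl(A)$ while $b\notin dcl(A\cup\{a\})$, contradicting EP; this shows no model of $T$ carries a definable expanding set. The one point that needs care, and the reason a naive ``largest element of $X$ below $b$'' argument does not immediately work, is that a bounded definable set in a model of Presburger need not attain a maximum (that is a separate hypothesis of the main theorem). Folding the maximality into the gap condition defining $G_a$, so that $a$ is forced to be the unique $X$-point in $(-\infty,b]$ with no element of $X$ strictly above it up to $b$, is exactly what sidesteps this obstacle.
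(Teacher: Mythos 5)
Your proof is correct and follows essentially the same route as the paper's: produce a point $a\in X$ outside the definable closure of the parameters with an infinite gap to its right, pick $b$ in that gap outside $dcl(A\cup\{a\})$, and recover $a$ from $b$ as the last $X$-point before $b$, so that $a\in dcl(A\cup\{b\})\setminus dcl(A)$ while $b\notin dcl(A\cup\{a\})$, contradicting EP. If anything, your write-up is more careful than the paper's at two points: the paper invokes ``the successor of $x$ in $X(M^*)$'' (whose existence is not automatic, since this proposition does not assume DC) and displays the formula $\Phi(t,z)\equiv t\in X\wedge t<z$ without the gap clause that makes its solution unique, whereas you fold the gap condition $\forall z\,(x<z\le b\rightarrow z\notin X)$ directly into $\Phi$, which is exactly what makes the uniqueness argument go through.
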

\begin{proof}
Let $M$ be a model of $T$ and $X\subset M$ definable. Assume that $X$ is expanding. Then by compactness there is $M^*$ elementary expansion of $M$ and $x<y'\in X(M^*)$ such that $x\notin dcl(\varnothing)$, $x+n\notin X(M^*)$ for all $n\in \N$. Let $y$ be the successor of $x$ in $X(M^*)$. By compactness again there is an elementary extension $M'$ of $M^*$ and $z\in M'$ such that $x<z<y$ and $z\notin dcl(\{x\})$. Then, $x\in dcl(\{z\})$ as it is the only element in $M'$ that satisfies the formula $\Phi(t,z)\equiv t\in X\wedge t<z$.  
\par So $x\in dcl(\{z\})\setminus dcl(\varnothing)$ while $z\notin dcl(\{x\})$. This contradicts the assumption that $T$ has the exchange property.
\end{proof}

In general it is not true that EP implies $\mathcal{L}_{Pres}$-minimality. For instance consider $G$ be a nonstandard model of Presburger and the expansion of $\mathcal{L}_{Pres}$ by a unary predicate interpreted in $G$ by a proper convex subgroup. In fact, the existence a proper definable subset closed under successor or predecessor is the only remaining obstruction to $\mathcal{L}_{Pres}$-minimality.

\begin{definition} $(G,\mathcal{L})$ satisfies DC (definable completeness) if any definable unary set has a supremum in $G\cup\{\infty\}$.
\end{definition}
\begin{Remark} DC is a first-order property. So it is also a property of the theory of $G$.
\end{Remark}

For the rest of this paper, we assume that $G\vDash DC$ and that $Th(G,\mathcal{L})$ satisfies EP (so by Proposition \ref{EP implies no expanding set}, there is no definable expanding set). We fix $X$ a $\mathcal{L}$-definable subset of $G$. Under these hypotheses, we shall prove that $X$ is $\mathcal{L}_{Pres}$-definable (Proposition \ref{X Lpres-def}). Then, the main theorem follows:
\begin{theorem} $Th(G,\mathcal{L})$ is $\mathcal{L}_{Pres}$-minimal iff $Th(G,\mathcal{L})$ satisfies EP and DC.
\end{theorem}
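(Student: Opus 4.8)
The plan is to prove both directions of the main theorem, using the promised Proposition \ref{X Lpres-def} as the engine for the hard direction.

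\medskip

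\textbf{The easy direction ($\Rightarrow$).} Suppose $Th(G,\mathcal{L})$ is $\mathcal{L}_{Pres}$-minimal. Then every $\mathcal{L}$-definable subset of every model is already $\mathcal{L}_{Pres}$-definable, so the $\mathcal{L}$-induced structure on each unary set coincides with the Presburger structure. I would invoke the fact, cited from \cite{Cluckers}, that $\mathcal{L}_{Pres}$-minimal theories satisfy the exchange property; alternatively one can argue directly that in a pure Presburger structure $dcl$ behaves like an abelian-group-theoretic closure and exchange holds. For DC, I would note that definable completeness is inherited from $\Z$ (equivalently from pure Presburger): since every definable unary set is $\mathcal{L}_{Pres}$-definable, it is a finite Boolean combination of arithmetic progressions and half-lines, and such sets manifestly have a supremum in $G\cup\{\infty\}$; as DC is first-order (by the Remark), it passes to all of $Th(G,\mathcal{L})$. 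Thus EP and DC both follow.

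\medskip

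\textbf{The hard direction ($\Leftarrow$).} Suppose $Th(G,\mathcal{L})$ satisfies EP and DC. I must show $Th(G,\mathcal{L})$ is $\mathcal{L}_{Pres}$-minimal, i.e.\ that for \emph{every} model $\mathcal{M}\equiv G$ and every $\mathcal{L}$-definable $X\subseteq M$, the set $X$ is $\mathcal{L}_{Pres}$-definable. Here the key move is that both EP and DC are properties of the theory: EP quantifies over all models by definition, and DC is first-order by the Remark, so any $\mathcal{M}\equiv G$ again satisfies DC and lies in a theory with EP. Hence every such $\mathcal{M}$ meets the standing hypotheses of the section, and Proposition \ref{X Lpres-def} applies to it, giving that the fixed definable $X\subseteq M$ is $\mathcal{L}_{Pres}$-definable. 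Since $\mathcal{M}$ and $X$ were arbitrary, $Th(G,\mathcal{L})$ is $\mathcal{L}_{Pres}$-minimal by definition.

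\medskip

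\textbf{Main obstacle.} At the level of the theorem itself, essentially all the content is deferred to Proposition \ref{X Lpres-def}; the theorem is then a bookkeeping argument about transferring the hypotheses EP and DC to an arbitrary elementarily equivalent model. The one genuine subtlety to handle carefully is the quantifier structure: I must confirm that the standing assumptions ``$G\vDash DC$'' and ``$Th(G,\mathcal{L})$ satisfies EP'' are exactly the hypotheses needed for Proposition \ref{X Lpres-def} to apply to an arbitrary $\mathcal{M}\vDash Th(G,\mathcal{L})$, and that no parameter-dependence in the definition of $X$ causes trouble (the Remark that all definable sets are definable with parameters is what lets me treat a parametrically defined $X$ uniformly). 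Provided Proposition \ref{X Lpres-def} is stated, as the excerpt indicates, for a model satisfying these theory-level hypotheses, the deduction of the theorem is immediate.
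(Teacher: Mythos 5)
Your proof is correct and follows essentially the same route as the paper: the forward direction cites \cite{Cluckers} for EP and notes DC is immediate from $\mathcal{L}_{Pres}$-minimality, while the converse is delegated entirely to Proposition \ref{X Lpres-def}, with the (correct) observation that EP and DC are theory-level properties so the proposition applies in every model $\mathcal{M}\equiv G$. The paper's own proof is just a terser version of this same bookkeeping.
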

\begin{proof}
One direction is done in \cite{Cluckers}:  EP follows for instance from cell-decomposition and DC is an immediate consequence of $\mathcal{L}_{Pres}$-minimality. 
The other direction will be proved in Proposition \ref{X Lpres-def}.
\end{proof}
The proof of Proposition \ref{X Lpres-def} has two main steps. First, we prove a non-standard version of Michaux-Villemaire \cite{M-V}. More precisely following their strategy we prove that if $X\subset G$ is definable then $X\cap\N$ is a finite union of points and of cosets of $dG$ (for some integer $d$). Then by DC, we can extend this property to an infinite interval $[0,g]$.
Finally by compactness and EP, we prove that $X$ is a finite union of $X_i$ were $X_i$ is an interval intersected with finitely many cosets of $d_iG$.
\begin{Remark} It is already known that the above theorem fails for generalisation of $\mathcal{L}_{Pres}$-minimality. For coset-minimal groups (in the sense of \cite{P-W}; note that $\mathcal{L}_{Pres}$-minimal groups are coset-minimal), there is an example in \cite{B-P-W} of coset-minimal group which does not have the exchange property.
\end{Remark}

\section{Proof of the main theorem.}

If $X$ is a subset of $M$ where $M\equiv G$ then for all $x\in X$, we denote the successor of $x$ in $X$ by $\sigma(x)$.
\begin{lemma}\label{non expanding set} Let $X$ be a non expanding set. Then there is $u\in \N$ such that for all $x\in X\cap\N$, if $\sigma(x)\in X\cap \N$. Then, $\sigma(x)-x\leq u$.
\end{lemma}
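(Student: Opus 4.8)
The plan is to read the non-expansion hypothesis as a finiteness statement and then turn the finitely many ``long-gap'' points into a single uniform bound. Unwinding the definition of an expanding set, saying that $X$ is \emph{not} expanding means precisely that there is a fixed $n\in\N$ for which there are only finitely many $x\in X$ with $x+1,\dots,x+n\notin X$; equivalently, the $\mathcal{L}$-definable set
\[
Y_n:=\{x\in X:\ x+1,\dots,x+n\notin X\}
\]
is finite. I would fix such an $n$ once and for all, and this is the only place where the hypothesis enters.

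Next I would relate the gaps inside $X\cap\N$ to membership in $Y_n$. Take $x\in X\cap\N$ with $\sigma(x)\in X\cap\N$ and split into cases. If $\sigma(x)-x\le n$ there is nothing to prove. If instead $\sigma(x)-x>n$, then $x+1,\dots,x+n$ all lie strictly between $x$ and its $X$-successor $\sigma(x)$, so none of them belongs to $X$; hence $x\in Y_n$, and since $x\in\N$ we get $x\in Y_n\cap\N$. The crucial observation is that every gap under consideration is a standard natural number, because both endpoints $x$ and $\sigma(x)$ lie in $X\cap\N$, so $\sigma(x)-x\in\N$. As $Y_n$ is finite, only finitely many values $\sigma(x)-x$ can arise from points $x\in Y_n\cap\N$; let $m\in\N$ be the largest of them (with $m=0$ if there are none). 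Setting $u:=\max(n,m)$, every $x\in X\cap\N$ with $\sigma(x)\in X\cap\N$ then satisfies $\sigma(x)-x\le u$: the short-gap case gives $\sigma(x)-x\le n\le u$, and the long-gap case gives $\sigma(x)-x\le m\le u$.

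The verification is otherwise routine; the single point I would be careful about is the standard/non-standard bookkeeping, which is also where the ``finite'' in the definition must be read externally, exactly as it is used in the proof of Proposition \ref{EP implies no expanding set}. The definable set $Y_n$ may contain non-standard elements, so one genuinely has to pass to $Y_n\cap\N$; what saves the argument is that the gaps being bounded have standard endpoints and are therefore themselves standard, so that finitely many of them admit a common standard bound $u$. Note that neither EP nor DC is needed here: the lemma is a direct consequence of the absence of arbitrarily long gaps that is encoded in non-expansion.
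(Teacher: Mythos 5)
Your proof is correct, but it is organized differently from the paper's. The paper argues by contradiction: assuming the gaps in $X\cap\N$ are unbounded, it fixes an arbitrary $n\in\N$ and inductively constructs an increasing sequence $x_0<x_1<\cdots$ in $X\cap\N$ with $\sigma(x_i)-x_i>n$ (at each step invoking the unboundedness with $u=\sigma(x_{k-1})+n$ to force the new point past the previous one), so that for every $n$ the resulting set $U_n$ is an infinite set of witnesses and $X$ is expanding, contradicting the hypothesis. You instead work directly from the negation of the definition: a single $n$ for which the witness set $Y_n$ is finite suffices, because every gap longer than $n$ between points of $X\cap\N$ forces its left endpoint into the finite set $Y_n\cap\N$, and the finitely many resulting gaps, having standard endpoints, are themselves standard and so admit a standard maximum. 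Both arguments pivot on the same elementary observation --- a gap longer than $n$ makes $x$ a witness for the expanding condition at $n$ --- but yours avoids the inductive construction of infinitely many witnesses altogether and isolates the only genuinely non-formal point, namely that external finiteness plus standardness of endpoints yields a uniform standard bound; the paper's route, at the cost of the extra construction, establishes the contrapositive in the stronger form that unbounded gaps produce infinitely many witnesses for every $n$ simultaneously. Your direct version is the more economical of the two, and your care in reading ``finitely many'' externally and passing to $Y_n\cap\N$ is exactly the bookkeeping the argument requires.
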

\begin{proof}
For we may assume that $X\cap\N$ is infinite (Otherwise the Lemma is trivial). By contradiction assume that for all $u\in \N$, there is $x\in X\cap\N$ such that $\sigma(x)-x>u$. Apply this assumption with $u=n$ and we get $x_0\in X\cap\N$ such that $\sigma(x_0)-x_0>n$. By induction we can construct $x_0<x_1<\cdots<x_{k}\in X\cap\N$ such that for all $i$, $\sigma(x_i)-x_i>n$. Indeed, we can apply the assumption with $u=\sigma(x_{k-1})+n$ (note that $u\in \N$ as $\sigma(x_{k-1})\in \N$ because $X\cap \N$ is infinite). Then we obtain $x_k$ with $\sigma(x_k)-x_k>u=\sigma(x_{k-1})+n$. In particular $\sigma(x_k)-x_k>n$ and $\sigma(x_k)>\sigma(x_{k-1})$ ie $x_k>x_{k-1}$. Let $U_n=\{x_k;\ k\in \N\}$. Then $U_n$ is an infinite subset of $X$. Therefore $X$ is expanding as for all $x\in U_n$, $x+1,\cdots, x+n\notin X$. Contradiction.
\end{proof}

 Set $G^+=\{x\in G:\ x\geq 0\}$ and $G^{>0}=G^+\setminus \{0\}$. For the rest of this paper, we will assume without loss of generality that $X\subset G^+$. 
Let $a\leq b\in G^+$. Let $X[a,b]=\{x\in G^+:\ a+x\in X,\ a+x\leq b\}$.
\begin{lemma}\label{explicit X[a,b]} Let $a\leq b\in G^+$. Let $g\in G^+$. Then $X[a,b]=X[a+g,b+g]$ if and only if for all $a\leq x\leq b$, $x\in X$ iff $x+g\in X$.
\end{lemma}
\begin{proof}
Immediate from the definition.
\end{proof}

\begin{lemma}\label{increase}
For all $a\leq c\leq d\leq b\in G^+$ for all $g\in G^+$, if $X[a,b]=X[a+g,b+g]$ then $X[c,d]=X[c+g,d+g]$.
\end{lemma}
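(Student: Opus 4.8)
The plan is to route everything through the pointwise reformulation supplied by Lemma~\ref{explicit X[a,b]}, which is exactly the device that turns a statement about the sets $X[\cdot,\cdot]$ into a statement about membership in $X$ over an interval. First I would apply Lemma~\ref{explicit X[a,b]} to the hypothesis $X[a,b]=X[a+g,b+g]$ to extract the equivalence: for all $y$ with $a\leq y\leq b$, one has $y\in X$ if and only if $y+g\in X$.

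Next I would use the ordering hypothesis $a\leq c\leq d\leq b$, which says precisely that the interval $[c,d]$ is contained in $[a,b]$; hence every $y$ with $c\leq y\leq d$ also satisfies $a\leq y\leq b$. The equivalence obtained in the previous step therefore specializes, with no additional argument, to: for all $y$ with $c\leq y\leq d$, $y\in X$ if and only if $y+g\in X$.

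Finally I would invoke Lemma~\ref{explicit X[a,b]} a second time, now in the converse direction and applied to the pair $(c,d)$ in place of $(a,b)$, to translate this restricted equivalence back into the desired set-equality $X[c,d]=X[c+g,d+g]$. This closes the argument.

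The only point needing a moment's care is that both applications of Lemma~\ref{explicit X[a,b]} must take place with the same shift $g\in G^+$ and with endpoints lying in $G^+$, so that the shifted intervals $[a+g,b+g]$ and $[c+g,d+g]$ remain inside $G^+$ and the lemma genuinely applies; this is guaranteed by the assumptions $a,b,c,d,g\in G^+$ together with $a\leq c\leq d\leq b$. Beyond this bookkeeping there is no real obstacle: the substance of the lemma is merely that the ``$X$ is $g$-periodic on $[a,b]$'' condition is inherited by any subinterval, and the two passages through Lemma~\ref{explicit X[a,b]} do all of the work.
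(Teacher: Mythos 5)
Your proof is correct and is essentially identical to the paper's own argument: both apply Lemma~\ref{explicit X[a,b]} to translate the hypothesis into the pointwise statement that $x\in X$ iff $x+g\in X$ for all $a\leq x\leq b$, restrict this to the subinterval $[c,d]$, and then apply Lemma~\ref{explicit X[a,b]} again in the converse direction to recover the set equality $X[c,d]=X[c+g,d+g]$.
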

\begin{proof}
By Lemma \ref{explicit X[a,b]}, for all $a\leq x\leq b$, $x\in X$ iff $x+g\in X$. In particular, this is the case for all $c\leq x\leq d$. By Lemma \ref{explicit X[a,b]} again, $X[c,d]=X[c+g,d+g]$.
\end{proof}

\begin{definition}
$$
\begin{array}{rl}
\widetilde{d}:& G^+\rightarrow G^+\cup \{-1\}: \\
& n\longmapsto
\left\{\begin{array}{ll}
\min\{g>0:\ \exists a\in G^+ X[a,a+n]=X[a+g,a+g+n]\} & \mbox{if such $g$ exists;}\\
-1 & \mbox{ otherwise.}
\end{array}\right.
\end{array}
$$
\end{definition}

\begin{lemma} For all $n'<n\in G^+$, if $\widetilde{d}(n)\not=-1$ then $0< \widetilde{d}(n')\leq \widetilde{d}(n)$.
\end{lemma}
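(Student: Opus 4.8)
The plan is to read the conclusion as two assertions — that $\widetilde{d}(n')$ is defined (not $-1$) and positive, and that $\widetilde{d}(n') \leq \widetilde{d}(n)$ — and to obtain both at once by producing an explicit witnessing period for $n'$ out of a witnessing period for $n$. The tool for this is exactly the monotonicity recorded in Lemma \ref{increase}: a translation period that identifies $X$ on a long window automatically identifies $X$ on every shorter subwindow. So any $g$ that witnesses $\widetilde{d}(n)$ will, with the \emph{same} base point $a$, witness a period for the shorter length $n'$, and this immediately controls the minimum defining $\widetilde{d}(n')$.

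Concretely, I would first set $g = \widetilde{d}(n)$. Since $\widetilde{d}(n) \neq -1$, the definition of $\widetilde{d}$ furnishes some $a \in G^+$ with $g > 0$ and $X[a, a+n] = X[a+g, a+g+n]$. Because $n' < n$ and $n', n \in G^+$, the four elements satisfy $a \leq a \leq a+n' \leq a+n$, so I may apply Lemma \ref{increase} with $c = a$ and $d = a+n'$ to the equality $X[a,a+n] = X[a+g,a+g+n]$. This yields $X[a, a+n'] = X[a+g, a+g+n']$, i.e. $g$ is a positive period for the window of length $n'$ based at $a$.

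Consequently $g$ lies in the set $\{g' > 0 : \exists a' \in G^+,\ X[a', a'+n'] = X[a'+g', a'+g'+n']\}$ over which $\widetilde{d}(n')$ is taken to be the minimum. Since this set is now seen to be nonempty, $\widetilde{d}(n') \neq -1$; and since every element of it is strictly positive and the set contains $g$, the minimum satisfies $0 < \widetilde{d}(n') \leq g = \widetilde{d}(n)$, which is exactly the claim. I do not expect a genuine obstacle here: the statement is essentially a one-line corollary of Lemma \ref{increase}, and the only point deserving care is verifying the chain of inequalities $a \leq a \leq a+n' \leq a+n$ that licenses the application of that lemma, together with the observation that positivity of $\widetilde{d}(n')$ is forced by the constraint $g' > 0$ built into the minimand.
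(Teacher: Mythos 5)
Your proof is correct and follows exactly the paper's argument: take the witness $a$ for $\widetilde{d}(n)$, apply Lemma \ref{increase} to shrink the window from length $n$ to length $n'$ with the same base point, and conclude from the definition of $\widetilde{d}(n')$ as a minimum over a now-nonempty set of positive elements. The only difference is that you spell out the inequalities $a\leq a\leq a+n'\leq a+n$ licensing the application of Lemma \ref{increase}, which the paper leaves implicit.
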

\begin{proof}
Let $n'\leq n\in G^+$ with $\widetilde{d}(n)>0$. Then by definition of $\widetilde{d}(n)$ there is  $a\in G^+$ such that $X[a,a+n]=X[a+\widetilde{d}(n),a+n+\widetilde{d}(n)]$.
So by Lemma \ref{increase} $X[a,a+n']=X[a+\widetilde{d}(n),a+n'+\widetilde{d}(n)]$. Therefore by definition of $\widetilde{d}$, $0<\widetilde{d}(n')\leq \widetilde{d}(n)$.
\end{proof}

\begin{lemma}\label{function d} $\widetilde{d}(\N)\subset \N$.
\end{lemma}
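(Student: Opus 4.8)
The plan is to show two things for each standard $n\in\N$: first that $\widetilde{d}(n)\neq -1$, and second that $\widetilde{d}(n)$ is bounded above by a standard natural number. Together with the fact that $\widetilde{d}(n)>0$ by definition, these force $\widetilde{d}(n)\in\N$, since $\N$ is an initial segment of $G^+$ (in a model of Presburger there is no element strictly between consecutive standard integers, so any $g$ with $0<g\leq N$ for standard $N$ is itself standard).

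The key idea is to reduce the existence of a period to a finite pigeonhole count, exploiting that $n$ is \emph{standard}. For a standard starting point $k\in\N$, consider the local pattern of $X$ on the window of length $n$ beginning at $k$, namely $P_k=\{i\in\{0,\dots,n\}:\ k+i\in X\}\subseteq\{0,\dots,n\}$. Since $n$ is standard, there are only $2^{n+1}$ possible patterns, a standard finite number. Examining the windows starting at $k=0,1,\dots,2^{n+1}$ (that is, $2^{n+1}+1$ many), the pigeonhole principle yields two indices $k_1<k_2\leq 2^{n+1}$ with $P_{k_1}=P_{k_2}$. By Lemma \ref{explicit X[a,b]}, setting $a=k_1$ and $g=k_2-k_1>0$ gives $X[a,a+n]=X[a+g,a+g+n]$, witnessing that the set in the definition of $\widetilde{d}(n)$ is nonempty, so $\widetilde{d}(n)\neq -1$. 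Moreover $g\leq 2^{n+1}$, so the minimum $\widetilde{d}(n)$ satisfies $0<\widetilde{d}(n)\leq 2^{n+1}$, and hence $\widetilde{d}(n)\in\N$ as explained above. (Degenerate cases, such as $X\cap\N$ finite or empty, are harmless: eventually all windows have empty pattern, so repetition occurs all the more.)

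The only point requiring care is the restriction to standard windows: the pigeonhole argument is legitimate precisely because $n\in\N$ makes the number of patterns standardly finite and the candidate starting points $0,\dots,2^{n+1}$ all lie in $\N\subseteq G^+$. This is exactly where the hypothesis $n\in\N$ is used; for nonstandard $n$ there would be nonstandardly many patterns and the bound $2^{n+1}$ would itself be nonstandard, so the argument would not place $\widetilde{d}(n)$ in $\N$. I do not expect a serious obstacle beyond bookkeeping, as the two invocations of Lemma \ref{explicit X[a,b]} (to pass between agreement of $X[a,a+n]=X[a+g,a+g+n]$ and equality of local patterns) are routine.
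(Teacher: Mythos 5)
Your proof is correct and takes essentially the same route as the paper: both apply the pigeonhole principle to the finitely many possible patterns $X[k,k+n]\subseteq\{0,\dots,n\}$ of windows of standard length $n$, obtain two agreeing windows with standard starting points, and conclude $0<\widetilde{d}(n)\leq$ a standard bound, hence $\widetilde{d}(n)\in\N$ because $\N$ is an initial segment of $G^+$. The only cosmetic differences are that you make the bound $2^{n+1}$ and the initial-segment step explicit, both of which the paper leaves implicit.
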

\begin{proof}
First we remark that for all $k,n\in \N$, $X[k,k+n]\subset \{0,\cdots n\}$. Therefore by the Pigeonhole principle for all $n\in \N$, there is $k<l\in \N$ such that $X[k,k+n]=X[l,l+n]$. Now by definition of $\widetilde{d}$, $0<\widetilde{d}(n)\leq l-k\in \N$.
\end{proof}

\begin{definition}
$$
\begin{array}{rl}
\widetilde{a}:&G^+\times G^{>0} \rightarrow G^+\cup \{-1\}:\\
& (n,d)\longmapsto
\left\{\begin{array}{ll}
\min\{a\in G^+:\  X[a,a+n]=X[a+d,a+n+d]\} & \mbox{if such $a$ exists;}\\
-1 & \mbox{ otherwise.}
\end{array}\right.
\end{array}
$$
\end{definition}

\begin{lemma}\label{function a} For all $n\in \N$, there is $d\in \N_0$ such that $\widetilde{a}(n,d)\in \N$.
\end{lemma}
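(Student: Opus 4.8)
The plan is to find, for each fixed $n\in\N$, a standard value of $d$ that admits a \emph{standard} witness to the relevant periodicity relation, and then to invoke the fact that $\N$ is an initial segment of $G^+$ to force the minimum defining $\widetilde{a}(n,d)$ to be standard as well. I would deliberately avoid trying to work with the optimal period $\widetilde{d}(n)$; instead I would recycle the Pigeonhole counting already carried out in the proof of Lemma \ref{function d}, which conveniently manufactures a witness that is itself a natural number.

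First I would recall that for every $k\in\N$ we have $X[k,k+n]\subseteq\{0,\dots,n\}$, so the assignment $k\mapsto X[k,k+n]$ takes only finitely many values (at most $2^{n+1}$) as $k$ ranges over $\N$. The Pigeonhole principle then yields $k<l$ in $\N$ with $X[k,k+n]=X[l,l+n]$. I would set $d:=l-k$; since $l>k$ this is a positive natural number, so $d\in\N_0$ and the pair $(n,d)$ lies in the domain of $\widetilde{a}$.

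Next I would check that $a=k$ witnesses the defining equality $X[a,a+n]=X[a+d,a+n+d]$, since $X[k+d,k+d+n]=X[l,l+n]=X[k,k+n]$. This shows that the set over which the minimum in the definition of $\widetilde{a}(n,d)$ is taken is nonempty, hence $\widetilde{a}(n,d)\neq -1$ and in fact $\widetilde{a}(n,d)\leq k$.

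The final—and the only genuinely delicate—step is to pass from $\widetilde{a}(n,d)\leq k$ to the conclusion $\widetilde{a}(n,d)\in\N$. The subtlety here is that the minimum in the definition of $\widetilde{a}$ ranges over all of $G^+$, which in a nonstandard model contains elements with no standard bound, so a priori the minimum could be attained at a nonstandard point below the witness $k$. I would resolve this using the initial-segment property of models of Presburger arithmetic: since $G\equiv\Z$, every $g\in G^+$ with $g\leq m$ for some $m\in\N$ is itself standard. As $\widetilde{a}(n,d)\leq k\in\N$, this forces $\widetilde{a}(n,d)\in\N$, which completes the argument. I expect this last observation to be the crux of the proof, everything preceding it being a direct reuse of the counting already established in Lemma \ref{function d}.
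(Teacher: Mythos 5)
Your proof is correct and follows essentially the same route as the paper: the Pigeonhole principle produces $k<l\in\N$ with $X[k,k+n]=X[l,l+n]$, one takes $d=l-k$, and the witness $a=k$ bounds $\widetilde{a}(n,d)$ by a standard number. The only difference is that you spell out the initial-segment property of $\N$ in $G^+$ (which also guarantees the minimum exists and is standard), a step the paper's one-line conclusion $0\leq\widetilde{a}(n,l-k)\leq k\in\N$ leaves implicit.
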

\begin{proof}
By the Pigeonhole principle for all $n\in \N$, there is $k<l\in \N$ such that $X[k,k+n]=X[l,l+n]$. Then by definition of $\widetilde{a}$, $0\leq \widetilde{a}(n,l-k)\leq k\in \N$. Take $d=l-k$.
\end{proof}

The functions $\widetilde{a},\widetilde{d}$ come from Michaux-Villemaire \cite{M-V}. The authors prove that if $G=\Z$, $\alpha(n):=\widetilde{a}(n,\widetilde{d}(n))\in \N$. In fact, it is proved that for $n$ large enough, $X\cap [\alpha(n),\infty)$ is defined by congruences relations modulo $d(n)$. In our case this is not true anymore as $\alpha(n)$ may not be in $\N$. For instance, take $X= (3G\cap [0, g]) \cup (2G\cap [g+1,\infty)$ for some $g\in G\setminus \N$. Then for $n\in \N$, $\widetilde{d}(n)=2$ and $\alpha(n)=g+1$. What we would like to capture is $\widetilde{d}(n)=3$ and $\alpha(n)=0$. For we defined the below function $D$ which is a twisted version of $\widetilde{d}$. With this new function we will get that $D(n)=2$ and $A(n):=\widetilde{a}(n,D(k))=0$ for all $k,n\in \N$ large enough as required. 

\begin{definition}
$$
\begin{array}{rl}
D:&G^+ \rightarrow G^+\cup \{-1\}:\\
& n \longmapsto
\left\{\begin{array}{ll}
\min\{d\in G^{>0}:\ \widetilde{a}(n,d)\not=-1\wedge\\
 \forall d'>d\ \widetilde{a}(n,d')\not=-1\rightarrow [\widetilde{a}(n,d)\leq \widetilde{a}(n,d') \vee \widetilde{a}(n,d')+d'>\widetilde{a}(n,d)+n]\}&\mbox{ if such $d$ exists}\\
-1 & \mbox{ otherwise.}
\end{array}\right.
\end{array}
$$
Let $A(n):=\widetilde{a}(n,D(n))$.
\end{definition}

\begin{lemma}\label{function D A} $D(\N)\subset \N$ and $A(\N)\subset \N$.
\end{lemma}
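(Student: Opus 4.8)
The plan is to fix $n\in\N$ and to produce a \emph{standard} admissible period $d^{**}$ lying in the set over which the minimum defining $D(n)$ is taken; this bounds $D(n)$ by a standard quantity, and a short extra argument then bounds $A(n)$. Throughout, write $S=\{d\in G^{>0}:\ \widetilde{a}(n,d)\neq -1\}$ for the set of admissible periods and
$$T=\{d\in S:\ \forall d'>d\ (d'\in S\rightarrow [\widetilde{a}(n,d)\leq\widetilde{a}(n,d')\vee \widetilde{a}(n,d')+d'>\widetilde{a}(n,d)+n])\},$$
so that $D(n)=\min T$ whenever $T\neq\varnothing$. I first record two preliminaries. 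Since every window $X[a,a+n]$ is a subset of the standard finite set $\{0,\dots,n\}$, only standardly many patterns occur on windows of width $n$. Moreover, $DC$ implies that every nonempty definable $U\subseteq G^+$ has a minimum: the set of lower bounds of $U$ is definable and bounded above by any element of $U$, so by $DC$ it has a supremum $s\in G$, and one checks $s=\min U$ using that the order is discrete. As $T$ is $\mathcal{L}$-definable with parameter $n$, it therefore suffices to show $T\neq\varnothing$ and that $\min T$ is standard.

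The key construction is as follows. By Lemma \ref{function a} the set $\{\widetilde{a}(n,d):\ d\in S\cap\N\}$ contains a standard natural number. Let $a^{**}$ be the least standard natural number occurring in it (this exists by the external well-ordering of $\N$), attained by some standard period $d^{**}\in S$ with $\widetilde{a}(n,d^{**})=a^{**}$. By minimality, every standard $d\in S$ satisfies $\widetilde{a}(n,d)\geq a^{**}$ (a standard value is $\geq a^{**}$ by the choice of $a^{**}$, and a nonstandard value exceeds the standard number $a^{**}$ automatically). I then claim $d^{**}\in T$. Let $d'>d^{**}$ with $d'\in S$. If $d'$ is standard, then $\widetilde{a}(n,d')\geq a^{**}=\widetilde{a}(n,d^{**})$ and the first disjunct holds. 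If $d'$ is nonstandard, then $d'>a^{**}+n$ since $a^{**}+n$ is standard, whence $\widetilde{a}(n,d')+d'\geq d'>a^{**}+n=\widetilde{a}(n,d^{**})+n$ and the second disjunct holds. Thus $d^{**}\in T$.

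It then remains to conclude. Since $d^{**}\in T$, the set $T$ is nonempty, so $D(n)=\min T$ exists in $G^+$ by the preliminary remark; in particular $D(n)\neq -1$. As $0<D(n)\leq d^{**}$ with $d^{**}$ standard, $D(n)$ is standard, giving $D(\N)\subseteq\N$. For $A(n)=\widetilde{a}(n,D(n))$: if $D(n)=d^{**}$ then $A(n)=a^{**}$ is standard; otherwise $D(n)<d^{**}$, and applying the defining condition of $T$ to the pair $D(n)<d^{**}\in S$ yields either $A(n)=\widetilde{a}(n,D(n))\leq\widetilde{a}(n,d^{**})=a^{**}$ or $\widetilde{a}(n,D(n))<a^{**}+d^{**}-n$. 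In both subcases $A(n)$ is a nonnegative element bounded above by the standard quantity $a^{**}+\max(0,d^{**}-n)$, hence standard, giving $A(\N)\subseteq\N$.

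The heart of the argument, and the step I expect to be the main obstacle, is the construction of the standard period $d^{**}$ realising the least \emph{standard} starting position, together with the observation that the second disjunct in the definition of $D$ automatically neutralises every nonstandard competing period $d'$ (any such $d'$ already exceeds the standard length $a^{**}+n$). This is precisely the mechanism by which the twisted function $D$ avoids the nonstandard starting positions that spoil $\alpha(n)=\widetilde{a}(n,\widetilde{d}(n))$: because the minimal admissible period may itself be nonstandard (as in the motivating example), one is forced to minimise over standard periods rather than over all of $S$, and the delicate point is to check that this choice still lands inside $T$ and keeps $A(n)$ bounded by a standard quantity.
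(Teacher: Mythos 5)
Your proof is correct and takes essentially the same route as the paper's: your $d^{**}$ (a standard period attaining the least standard starting value $a^{**}$) plays exactly the role of the paper's $d^*=\min B_n$, your verification that $d^{**}\in T$ handles standard competitors via the first disjunct and nonstandard ones via the second disjunct just as the paper does for its set $E_n$, and your direct bound on $A(n)$ is simply the contrapositive form of the paper's argument by contradiction. The only (welcome) addition is your explicit DC-based justification that nonempty definable subsets of $G^+$ have minima, a point the paper leaves implicit.
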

\begin{proof}
By Lemma \ref{function a}, for all $n\in \N$ there is $d\in \N_0$ such that $\widetilde{a}(n,d)\in \N$. Let $B_n:=\{d'\in \N_0:\ \widetilde{a}(n,d')\in \N\mbox{ and } \forall d''\in \N_0, \widetilde{a}(n,d'')\in \N\rightarrow \widetilde{a}(n,d')\leq \widetilde{a}(n,d'')\}$. This set is nonempty. For there is $d'$ such that $\widetilde{a}(n,d')$ is minimal in $\widetilde{a}(n,\N) \setminus\{-1\} $. Then as $\widetilde{a}(n,d)\in \N$,  $\widetilde{a}(n,d')\in \N$. By definition, $d'\in B_n$. 
\par Let $d^*=\min\{d'\in B_n\}\in \N_0$. Set 
$$E_n:=\{d\in G^{>0}: \widetilde{a}(n,d)\not=-1\wedge \forall d'>d\ \widetilde{a}(n,d')\not=-1\rightarrow [\widetilde{a}(n,d)\leq \widetilde{a}(n,d') \vee \widetilde{a}(n,d')+d'>\widetilde{a}(n,d)+n]\}.$$
Let $d'>d^*$ such that $\widetilde{a}(n,d')\not=-1$. First if $d'\in \N$ then $\widetilde{a}(n,d^*)\leq \widetilde{a}(n,d')$ (for either $\widetilde{a}(n,d')\notin \N$ and it is trivial as $\widetilde{a}(n,d^*)\in \N$ or $\widetilde{a}(n,d')\in \N$ and this follows from the definition of $B_n$). Second if  $d'\notin\N$ then $\widetilde{a}(n,d')+d'>k$ for all $k\in \N$. So $\widetilde{a}(n,d^*)+n<\widetilde{a}(n,d')+d'$. This proves that $d^*\in E_n$.
This implies that $D(n)\leq d^*$ by definition of $D$. So $D(\N)\in \N$.
\par Let $d\in \N$ such that $d=D(n)$. Assume that $A(n)=\widetilde{a}(n,d)\notin \N$. As $\widetilde{a}(n,d^*)\in \N$, $\widetilde{a}(n,d)>\widetilde{a}(n,d^*)$. As $D(n)\leq d^*$ we get that $d<d^*$. On the other hand, as $\widetilde{a}(n,d^*)+d^*\in \N$ and $\widetilde{a}(n,d)\notin \N$, we have that $\widetilde{a}(n,d)+n>\widetilde{a}(n,d^*)+d^*$. Now by definition of $D(n)$, $d\not=D(n)$. We get a contradiction. Therefore $A(n)\in \N$.
\end{proof}

\begin{definition} Let $U\subset V\subset G$. We say that $U$ is cofinal in $V$ if for all $v\in V$ there is $u\in U$ such that $v\leq u$. 
\end{definition}

\begin{lemma}\label{f nondecreasing}
Let $f:U\rightarrow G^+$ definable such that $U_0:=U\cap \N$ is cofinal in $\N$ and $f(U_0)\subset \N$ then there is $U'\subset U$ definable such that $U'\cap \N$ is cofinal in $\N$ and $f$ is nondecreasing on $U'$.
\end{lemma}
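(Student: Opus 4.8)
The plan is to split on the behaviour of $f$ along the cofinal set $U_0$ and to exhibit an explicit definable $U'$ in each case. The guiding dichotomy is whether some sublevel set of $f$ is cofinal in $\N$: \emph{either} there is a least $v^*\in\N$ such that $\{x\in U_0:\ f(x)\le v^*\}$ is cofinal in $\N$, \emph{or} for every $v\in\N$ this set is bounded in $\N$, which is to say that $f\to\infty$ along $U_0$. These two cases require genuinely different constructions, so some case analysis seems unavoidable. Note that the hypothesis $f(U_0)\subset\N$ is exactly what makes this dichotomy meaningful, since it forces the relevant values to be genuine natural numbers.

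First, suppose there is a least $v^*\in\N$ with $\{x\in U_0:\ f(x)\le v^*\}$ cofinal in $\N$. Here I would simply take
$$U'=\{x\in U:\ f(x)=v^*\}.$$
Since $f$ is constant equal to $v^*$ on $U'$, it is trivially nondecreasing there. For cofinality, minimality of $v^*$ gives that $\{x\in U_0:\ f(x)\le v^*-1\}$ is bounded in $\N$, so $\{x\in U_0:\ f(x)=v^*\}$ is a cofinal set minus a bounded set, hence still cofinal; and this set is exactly $U'\cap\N$. The only point to check is definability, which is fine because $v^*\in\N\subset G$ is a legitimate parameter.

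Second, suppose $f\to\infty$ along $U_0$, i.e.\ for every $v\in\N$ the set $\{x\in U_0:\ f(x)\le v\}$ is bounded in $\N$. Here I would take the set of left-to-right maxima
$$U'=\{x\in U:\ \forall y\in U\ (y\le x\rightarrow f(y)\le f(x))\}.$$
Monotonicity is immediate: if $x<x'$ lie in $U'$, then the defining condition of $x'$ applied to $y=x$ gives $f(x)\le f(x')$. For cofinality I would use the structure of $G\equiv\Z$, namely that for $x\in\N$ every $y\in U$ with $y\le x$ already lies in $\N$, so $U'\cap\N$ is exactly the set of left-to-right maxima of $f|_{U_0}$. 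Given $N\in\N$, let $K$ be the finite maximum of $f$ on $U_0\cap[0,N]$ and let $x_1$ be the least element of $U_0$ with $f(x_1)>K$, which exists since $f\to\infty$; then $x_1>N$ and every $y\in U_0$ below $x_1$ satisfies $f(y)\le K<f(x_1)$, so $x_1\in U'\cap\N$. Hence $U'\cap\N$ is cofinal.

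I expect the main obstacle to be not the individual constructions but isolating the correct dichotomy and, in the first case, pinning down the right $U'$. The tempting uniform choices each fail on one side: a function that is large at a single small point and then small forever defeats any running-maximum set, while a definable function decreasing to a small value on a final segment $[a,\infty)$ defeats the future-minimum set $\{x\in U:\ f(x)\le f(y)\ \forall y\ge x\}$. Passing instead to the constant level set $\{x\in U:\ f(x)=v^*\}$ in the bounded case is precisely what repairs this, and the argument is elementary, using neither DC nor EP beyond the ambient structure of $G\equiv\Z$.
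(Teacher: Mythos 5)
Your proof is correct, but your decomposition is genuinely different from the paper's, even though both use the same two building blocks (a level set in the ``bounded'' case, left-to-right maxima in the ``unbounded'' case). The paper splits on whether $f(U_0)$ is finite or infinite: when finite, pigeonhole gives a value $s$ with $f^{-1}(s)\cap U_0$ infinite and one takes $U'=f^{-1}(s)\cap U$; when infinite, one takes the \emph{strict} running maxima $U'=\{x\in U:\ \forall y\in U\ (y<x\rightarrow f(y)<f(x))\}$ and proves cofinality inductively: given $y\in U'\cap\N$, infiniteness of $f(U_0)$ produces a later $t\in U_0$ with $f(t)>f(y)$, and the \emph{least} such $t^*$ is again a running maximum (existence of this minimum needs the implicit remark that the set contains $t\in\N$ and initial segments $[0,t]$ are standard). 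Your dichotomy --- some sublevel set $\{x\in U_0:\ f(x)\le v\}$ cofinal, versus $f\to\infty$ along $U_0$ --- redistributes the work. Your first case is strictly broader than the paper's finite-image case: it absorbs, e.g., an oscillating $f$ (value $0$ on a cofinal set, arbitrarily large elsewhere), which the paper must instead handle with running maxima; the price is the least-$v^*$ choice and the ``cofinal minus bounded is cofinal'' argument in place of plain pigeonhole. In exchange, your second case carries the hypothesis $f\to\infty$, which makes cofinality of the (non-strict) running-maxima set immediate --- for any $N$, the least element of $U_0$ whose value exceeds $\max f(U_0\cap[0,N])$ lies beyond $N$ and is a new maximum --- so you avoid the paper's induction entirely, and all your minima are taken over subsets of $\N$, where well-ordering is free. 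Both arguments use the same two facts about the ambient structure ($[0,n]$ is standard for $n\in\N$, and $U\subseteq G^+$, which the lemma leaves implicit but which holds in all applications), and neither uses DC or EP, as you note. Net effect: comparable total difficulty, with your version giving the cleaner unbounded case and the paper's giving the cleaner bounded case.
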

\begin{proof}
\par First, if $f(U_0)$ is finite: Then by the Pigeonhole principle there is $s\in Im\ f\cap U_0$ such that $f^{-1}(s)\cap U_0\subset G^+$ is infinite. Set $U'= f^{-1}(s)\cap U$. By definition of $U'$, $f$ is constant on it. Also $U'\cap \N$ is cofinal in $\N$ as it is infinite. Clearly $U'$ is definable.
\par Second, if $f(U_0)$ is infinite. In that case, set $U':=\{x\in U: \forall y\in U y<x\rightarrow f(y)< f(x)\}$. By definition $f$ is nondecreasing of $U'$ and $U'$ is definable. It remains to prove that $U_0':= U'\cap \N$ is an infinite set. We remark that $U_0'$ is non empty as $\min\{n:\ n\in U_0\}\in U_0'$. Let $y\in U_0'$. Assume that for all $t\in U_0$ with $t>y$, $f(t)\leq f(y)$. So, $f(U_0)= [0,f(y)]\subset\N$ is a finite set. This contradicts the assumption that $f(U_0)$ is an infinite subset of $\N$. So, there is $t>y$, $t\in U_0$ such that $f(y)<f(t)$. Let $t^*=\min\{t>y:\ t\in U,\ f(y)<f(t)\}$. As $t^*\leq t$, $t^*\in \N$. Let $z<t^*$. If $y\leq z<t^*$ then $f(z)\leq f(y)$ and $f(y)<f(t^*)$ by definition of $t^*$. If $z<y$ then $f(z)< f(y)$ (by definition of $U'$) and $f(y)<f(t)$ (by definition of $t^*$). So, by definition of $U'$, $t^*\in U'$. Therefore $t^*\in U_0'$. This proves that $U_0'$ is infinite and concludes the proof of the lemma.
\end{proof}

\begin{lemma}\label{D A nondecreasing}
There is $\widetilde{U}\subset G^+$ definable such that $\widetilde{U}\cap \N$ is cofinal in $\N$ and $D,A$ are nondecreasing on $\widetilde{U}$.
\end{lemma}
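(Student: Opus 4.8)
The plan is to obtain $\widetilde{U}$ by applying Lemma \ref{f nondecreasing} twice, once for $D$ and once for $A$, and then to observe that passing to a definable subset preserves monotonicity. Since both $D$ and $A$ take values in $G^+\cup\{-1\}$, the first thing I would do is cut down to the (definable) sets where they are genuinely $G^+$-valued; by Lemma \ref{function D A} these restricted domains still contain all of $\N$, so cofinality in $\N$ is never lost.

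More precisely, first I would set $U_1:=\{n\in G^+:\ D(n)\neq -1\}$. By Lemma \ref{function D A} we have $D(\N)\subset\N$, so $\N\subset U_1$ and hence $U_1\cap\N=\N$ is cofinal in $\N$; moreover $D(U_1\cap\N)=D(\N)\subset\N$. Thus the restriction $D:U_1\rightarrow G^+$ satisfies the hypotheses of Lemma \ref{f nondecreasing}, which yields a definable $U'\subset U_1$ with $U'\cap\N$ cofinal in $\N$ and $D$ nondecreasing on $U'$.

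Next I would repeat the argument for $A$ over the smaller domain $U'$. Set $U_2:=\{n\in U':\ A(n)\neq -1\}$. Again by Lemma \ref{function D A}, $A(\N)\subset\N$, so every natural number in $U'$ lies in $U_2$; therefore $U_2\cap\N=U'\cap\N$ is cofinal in $\N$ and $A(U_2\cap\N)\subset\N$. Applying Lemma \ref{f nondecreasing} to $A:U_2\rightarrow G^+$ then produces a definable $\widetilde{U}\subset U_2$ with $\widetilde{U}\cap\N$ cofinal in $\N$ and $A$ nondecreasing on $\widetilde{U}$.

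Finally I would check that $D$ remains nondecreasing on $\widetilde{U}$: since $\widetilde{U}\subset U_2\subset U'$ and the monotonicity of $D$ on $U'$ is a statement about all pairs $x<y$ in $U'$, it passes verbatim to the subset $\widetilde{U}$; hence $\widetilde{U}$ has all the required properties. There is no genuinely hard step here. The only points that require care are the two domain restrictions ensuring the functions are $G^+$-valued (so that Lemma \ref{f nondecreasing} is applicable), and the observation that these restrictions, together with $D(\N),A(\N)\subset\N$, never destroy cofinality in $\N$; the monotonicity of $D$ on the final set is obtained for free from the fact that it was already established on the larger set $U'$.
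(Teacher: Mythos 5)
Your proof is correct and follows essentially the same route as the paper: the paper's entire proof is the citation ``By Lemma \ref{function D A} and Lemma \ref{f nondecreasing}'', and your write-up merely makes explicit the intended two sequential applications of Lemma \ref{f nondecreasing} (first to $D$, then to $A$ on the resulting subset), together with the routine observations that the domain restrictions are definable, that $D(\N),A(\N)\subset\N$ preserves cofinality, and that monotonicity of $D$ passes to the smaller set.
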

\begin{proof}
By Lemma \ref{function D A} and Lemma \ref{f nondecreasing}.
\end{proof}

\begin{lemma}\label{DA definable} $D,A: \widetilde{U}\rightarrow G^+$ are definable maps.
\end{lemma}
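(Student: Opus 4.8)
The plan is to exhibit explicit first-order $\mathcal{L}$-formulas defining the graphs of these functions, building them up from the definability of $X$. The only genuine work is faithfully translating the nested definition of $D$ into a formula; everything else is mechanical.

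First I would record the observation underlying Lemma \ref{explicit X[a,b]}: the relation $X[a,a+n]=X[a+d,a+n+d]$ holds exactly when $\forall x\,(a\leq x\leq a+n\rightarrow(x\in X\leftrightarrow x+d\in X))$. Call this formula $\theta(a,n,d)$. Since $X$ is $\mathcal{L}$-definable, $\theta$ is a bona fide $\mathcal{L}$-formula and defines a definable relation on $G^+\times G^+\times G^{>0}$. Next I would treat $\widetilde{a}$: for $a\in G^+$ one has $\widetilde{a}(n,d)=a$ iff $\theta(a,n,d)\wedge\forall a'\,(0\leq a'<a\rightarrow\neg\theta(a',n,d))$, while the sentinel value is captured by $\widetilde{a}(n,d)=-1\leftrightarrow\neg\exists a\geq 0\,\theta(a,n,d)$. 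Hence the graph $\{(n,d,a):\widetilde{a}(n,d)=a\}$ is a definable subset of $G^+\times G^{>0}\times(G^+\cup\{-1\})$, and it is single-valued because the minimum is unique.

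The main step is then the definability of $D$, which demands care only because its definition is the most deeply nested; but every clause refers to values $\widetilde{a}(n,\cdot)$, whose graph is now known to be definable, so I can substitute the defining formula of $\widetilde{a}$ wherever such a value occurs. Concretely, $D(n)=d$ is expressed by the conjunction of: $d>0$; the clause $\exists a\geq 0\,\theta(a,n,d)$ asserting $\widetilde{a}(n,d)\neq-1$; the clause $\forall d'>d\,(\widetilde{a}(n,d')\neq-1\rightarrow[\widetilde{a}(n,d)\leq\widetilde{a}(n,d')\vee\widetilde{a}(n,d')+d'>\widetilde{a}(n,d)+n])$; and the minimality clause $\forall d''\,(0<d''<d\rightarrow\neg(\dots))$, where $(\dots)$ is the two preceding clauses with $d$ replaced by $d''$. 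Replacing each occurrence of the term $\widetilde{a}(n,\cdot)$ by its definable graph turns this into a single first-order $\mathcal{L}$-formula, so $D$ has a definable, single-valued graph. Finally $A$ is definable as the composition $A(n)=a\leftrightarrow\exists d\,(D(n)=d\wedge\widetilde{a}(n,d)=a)$.

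To conclude I would restrict to $\widetilde{U}$. Since $\widetilde{U}$ is definable by Lemma \ref{D A nondecreasing}, intersecting the graphs of $D$ and $A$ with $\widetilde{U}\times G^+$ keeps them definable; and by the very construction of $\widetilde{U}$ (via Lemma \ref{f nondecreasing} applied to $D$ and $A$, whose values on the cofinal set $\widetilde{U}\cap\N$ lie in $\N$ by Lemma \ref{function D A}) the restrictions avoid the value $-1$, so they are genuine definable maps $\widetilde{U}\rightarrow G^+$. The one place worth attention is the translation of the nested clause in the definition of $D$; once the graph of $\widetilde{a}$ is in hand, this is purely a matter of bookkeeping.
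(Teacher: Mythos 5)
Your proposal is correct and takes essentially the same approach as the paper: the paper's own proof is a one-liner observing that $D$ and $A$ are definable ``by definition'' and that $\widetilde{U}$ is definable by Lemma \ref{D A nondecreasing}, and your write-up simply carries out the routine bookkeeping that the paper leaves implicit, namely translating the nested definitions into first-order $\mathcal{L}$-formulas via the definable graph of $\widetilde{a}$. Your closing remark that the restrictions to $\widetilde{U}$ avoid the value $-1$ (by the construction of $\widetilde{U}$ through Lemmas \ref{function D A} and \ref{f nondecreasing}) is a worthwhile detail the paper also takes for granted.
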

\begin{proof}
By definition, $D,A: G^+\rightarrow G^+$ are definable maps. By Lemma \ref{D A nondecreasing}, $\widetilde{U}$ is definable.
\end{proof}

\begin{lemma}\label{d constant}
Assume that $\widetilde{U}$, $Im\ D$ and $Im\ A$ are nonexpanding. Then, there is $d\in \N_0$, such that for all $n\in \N$, there is $a\in \N$ such that $X[a,a+n]=X[a+d,a+d+n]$.
\end{lemma}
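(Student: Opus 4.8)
The plan is to reduce the statement to a single stabilization fact — that $D$ is eventually constant on $\widetilde{U}\cap\N$ — and then to establish that fact by contradiction, using overspill together with the three non-expansion hypotheses.

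First I would record the reduction, which is the easy half. Suppose we knew there were $d\in\N_0$ and $N_0\in\N$ with $D(n)=d$ for every $n\in\widetilde{U}\cap\N$ with $n\geq N_0$. Given an arbitrary $n\in\N$, I would use that $\widetilde{U}\cap\N$ is cofinal in $\N$ (Lemma \ref{D A nondecreasing}) to choose $m\in\widetilde{U}\cap\N$ with $m\geq\max(n,N_0)$. Then $D(m)=d$ and, by Lemma \ref{function D A}, $A(m)=\widetilde{a}(m,d)\in\N$; by the definition of $\widetilde{a}$ this gives $X[A(m),A(m)+m]=X[A(m)+d,A(m)+d+m]$. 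Since $n\leq m$, Lemma \ref{increase} applied to the subinterval $[A(m),A(m)+n]$ of $[A(m),A(m)+m]$ with shift $d$ yields $X[A(m),A(m)+n]=X[A(m)+d,A(m)+d+n]$, so $a:=A(m)\in\N$ witnesses the conclusion for this $n$. Thus the lemma reduces entirely to the stabilization claim. Note also that, since $D$ is nondecreasing on $\widetilde{U}$ and $D(\widetilde{U}\cap\N)\subset\N$ (Lemma \ref{function D A}), stabilization on $\widetilde{U}\cap\N$ is equivalent to $D$ being bounded there by a standard integer.

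Next I would prove that $D$ is bounded on $\widetilde{U}\cap\N$, arguing by contradiction. If $D$ is unbounded there then, being nondecreasing on $\widetilde{U}$, it exceeds every standard natural at each nonstandard point of $\widetilde{U}$; such points exist by overspill, since $\widetilde{U}$ is definable and $\widetilde{U}\cap\N$ is cofinal in $\N$. Fixing a nonstandard $N\in\widetilde{U}$, the value $D(N)$ is then infinite, and I would track $A(N)=\widetilde{a}(N,D(N))$ in parallel. The hypotheses enter through Lemma \ref{non expanding set}: non-expansion of $\widetilde{U}$, $Im\,D$ and $Im\,A$ means each of these sets has standardly bounded gaps on $\N$. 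The idea is to exploit the minimality built into the definition of $D$ — for each $n$, $D(n)$ is the least period whose offset $\widetilde{a}(n,D(n))$ cannot be strictly improved by a larger admissible period, which is exactly what forced $A(n)$ to be standard in Lemma \ref{function D A}. Following the pair $(D,A)$ as $n$ runs through $\widetilde{U}\cap\N$ toward $N$, I would argue that an unbounded $D$ forces the offsets to skip over arbitrarily long stretches infinitely often, so that $Im\,A$ (or, symmetrically, $Im\,D$) contains infinitely many $x$ with $x+1,\dots,x+m\notin Im\,A$ for every standard $m$; that is, one of these images is expanding, contradicting the hypothesis.

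The main obstacle is precisely this last step. Non-expansion of $Im\,D$ by itself does \emph{not} force $D$ to be bounded on the standard part: a nondecreasing $D$ whose standard image is cofinal with bounded gaps is perfectly compatible with a single jump up to infinite values, and one infinite gap is not enough for expansion. The delicate point is therefore to convert the \emph{single} passage of $D$ and $A$ from finite to infinite values into the \emph{infinitely many} large gaps that the definition of expanding demands. This is where the twisted definition of $D$ and the simultaneous control of all three sets $\widetilde{U}$, $Im\,D$ and $Im\,A$ must be combined, and it is the technical heart of the argument.
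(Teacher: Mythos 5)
Your first paragraph is sound and coincides with the easy case of the paper's argument: if $D$ stabilizes at a value $d$ on $\widetilde{U}\cap\N$, then $a=A(m)$ for a large enough $m\in\widetilde{U}\cap\N$ witnesses the conclusion via Lemma \ref{increase}. But the whole proposal then hangs on the claim that the three nonexpansion hypotheses force $D$ to be eventually constant (equivalently, bounded) on $\widetilde{U}\cap\N$, and this claim is never established: your last paragraph concedes exactly this, calling the missing step ``the technical heart of the argument.'' That is a genuine gap, and it is not a repairable detail, because stabilization is stronger than what the hypotheses give. As you yourself observe, a nondecreasing $D$ whose standard image is cofinal in $\N$ with gaps bounded by a standard constant is entirely compatible with $\widetilde{U}$, $Im\ D$ and $Im\ A$ being nonexpanding, so no contradiction is available from unboundedness alone. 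Accordingly, the paper never proves (and never uses) stabilization; it splits into two cases and handles the non-stabilizing case constructively rather than by refutation.

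The idea you are missing is that the conclusion does not require $D$ itself to be standardly bounded, only the \emph{increments} of $D$ to be standardly bounded. Since $D$ is nondecreasing on $\widetilde{U}$, the values $D(k)$ and $D(\sigma(k))$ at consecutive points of $\widetilde{U}\cap\N$ are consecutive (or equal) elements of $Im\ D$, so nonexpansion of $Im\ D$ together with Lemma \ref{non expanding set} gives a standard $u$ with $0\leq D(\sigma(k))-D(k)\leq u$, and similarly a standard $v$ for $A$. Given $n$, in the non-stabilizing case one picks the last $k\in\widetilde{U}\cap\N$ with $D(k)=D(n+v)$, so that $D(\sigma(k))>D(k)$. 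The two periodicity windows $X[A(k),A(k)+k]=X[A(k)+D(k),A(k)+D(k)+k]$ and $X[A(\sigma(k)),A(\sigma(k))+\sigma(k)]=X[A(\sigma(k))+D(\sigma(k)),A(\sigma(k))+D(\sigma(k))+\sigma(k)]$ overlap on an interval of length $k-(A(\sigma(k))-A(k))\geq n$; restricting both to that overlap (Lemma \ref{increase}) and chaining the two equalities produces $X[a,a+n]=X[a+d(n),a+d(n)+n]$ with $a=A(\sigma(k))+D(k)\in\N$ and $d(n)=D(\sigma(k))-D(k)\in\{1,\dots,u\}$. The Pigeonhole principle over the finitely many possible values of $d(n)$ then yields a single $d\leq u$ working for a cofinal set of lengths, and your own reduction via Lemma \ref{increase} finishes the proof. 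So the jump of $D$, far from being an obstruction to be refuted by overspill, is precisely the mechanism that manufactures the period $d$.
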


\begin{proof}
Let us remark that it  is sufficient to prove that there is $d\in \N_0$, such that for all $n\in \N$, there is $a,m\in \N$ such that $m\geq n$ and $X[a,a+m]=X[a+d,a+d+m]$.	For in that case, by Lemma \ref{increase}, $X[a,a+n]=X[a+d,a+d+n]$. 
\par By Lemma \ref{non expanding set} and $D$ is nondecreasing, there is $u\in \N$ such that for all $n\in \widetilde{U}_0:=\widetilde{U}\cap \N$, $0\leq D(\sigma(n))-D(n)\leq u$. Also there is $v\in \N$ such that for all $n\in \widetilde{U}_0$, $0\leq A(\sigma(n))-A(n)\leq v$.
\begin{claim}
There are $V,N\in \N$ such that  for all  $n\in \N$,  $n\geq N$, there are $d(n)\in \{0,\cdots, V\}$ and  $a\in \N$ with  $X[a,a+n]=X[a+d(n),a+d(n)+n]$.
\end{claim}
\begin{proof}
	First if $(D(n), n\in\widetilde{U}_0)$ is eventually constant, take $V'=\lim D(n)$, $N=\min\{n: \forall m\geq n D(n)=D(m)\}$, $d(n)=V'$
		and $a=A(n,D(n))$.	By definitions of $A,D,d$, we are done.
	\par Otherwise, $\forall n'\in \widetilde{U}_0\ \exists k'\in\widetilde{U}_0,\ n'<k'$ and $D(n')<D(k')$ (as $D$ is nondecreasing).
	Apply this with $n'=n+v$. Then for all $n$ there is $k'$ such that $D(k')>D(n+v)$. Take $k=\max\{k^*\in\widetilde{U}_0: D(k^*)=D(n+v)\}$. Then $D(\sigma(k))>D(n+v)=D(k)$. We remark that by definition of $A,D$
	$$ (1) X[A(k), A(k)+k]=X[A(k)+D(k),A(k)+D(k)+k] \mbox{ and, }$$ 
	$$(2) X[A(\sigma(k)), A(\sigma(k))+\sigma(k)]=X[A(\sigma(k))+D(\sigma(k)),A(\sigma(k))+D(\sigma(k))+\sigma(k)].$$

\par We have that $A(k)+k-A(\sigma(k))=k-(A(\sigma)(k)-A(k))\geq n+v-v=n$. So,
 $$A(k)\leq A(\sigma(k))\leq A(k)+k\qquad \mbox{ and }\qquad A(\sigma(k))\leq A(k)+k\leq A(\sigma(k))+\sigma(k).$$
 Therefore by Lemma \ref{increase}, (1) and (2)
	$$ (1') X[A(\sigma(k)), A(k)+k]=X[A(\sigma(k))+D(k),A(k)+D(k)+k] \mbox{, and }$$ 
	$$(2') X[A(\sigma(k)), A(k)+k]=X[A(\sigma(k))+D(\sigma(k)),A(k)+D(\sigma(k))+k].$$
We combine $(1')$ and $(2')$ to get
$$X[A(\sigma(k))+D(k),A(k)+D(k)+k]=X[A(\sigma(k))+D(\sigma(k)),A(k)+D(\sigma(k))+k].$$
Take $a=A(\sigma(k))+D(k)$ and $d=d(k)= D(\sigma(k))-D(k)$. We remark that $a\in \N$, that $d\leq u$ (by definition of $u$) and that $X[a,a+n]=X[a+d,a+d+n]$ (for remark that $a\leq a+n\leq A(k)+D(k)+k$ and apply Lemma \ref{increase} and the above equality). Set $V=u$ and $N=0$.
\end{proof}
By the claim and the Pigeonhole principle there is $d\in \{1,\cdots, V\}$ such that $E=\{n:\ d(n)=d\}$ is cofinal in $\N$. Then, for all $n\in \N$ there is $m\in E$ such that $m\geq n$. So, by the above claim, there is $a$ such that $X[a,a+m]=X[a,a+d+m]$. This shows that there is $d\in \{1,\cdots, V\}$ such that for all $n$ there is $a,m\in \N$ such that $m\geq n$ and $X[a,a+m]=X[a+d,a+d+m]$. By the remark at the beginning of the proof we are done.
\end{proof}

\begin{definition}\label{alpha}Let $d$ given by Lemma \ref{d constant}. We define
$$
\begin{array}{rl}
\alpha:& \widetilde{U}\rightarrow G^+\cup \{-1\}: \\
& n\longmapsto
\left\{\begin{array}{ll}
\min\{a\geq 0:\ X[a,a+n]=X[a+d,a+n+d]\} & \mbox{if there is at least one such $a$;}\\
-1 & \mbox{ otherwise.}
\end{array}\right.
\end{array}
$$
\end{definition}

By Lemma \ref{d constant}, for all $n\in \widetilde{U}\cap \N$, $\alpha(n)\in \N$. Note that $\alpha$ is definable.

\begin{lemma}\label{alpha nondecreasing} There is $U\subset \widetilde{U}$ definable such that $U_0:= U\cap\N$ is cofinal in $\N$ and $\alpha$ is non decreasing on $U$.
\end{lemma}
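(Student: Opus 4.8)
The plan is to deduce this directly from Lemma \ref{f nondecreasing}, applied to the function $\alpha$ itself. The only subtlety is that $\alpha$ is a priori valued in $G^+\cup\{-1\}$ rather than in $G^+$, so before invoking Lemma \ref{f nondecreasing} I first pass to the definable subset of $\widetilde{U}$ on which $\alpha$ does not take the spurious value $-1$.

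Concretely, I would set $U^*:=\{n\in\widetilde{U}:\ \alpha(n)\not=-1\}$. This is a definable set, since $\alpha$ is definable (Definition \ref{alpha}) and $\widetilde{U}$ is definable (Lemma \ref{DA definable}). By the remark following Definition \ref{alpha}, for every $n\in\widetilde{U}\cap\N$ we have $\alpha(n)\in\N$; in particular $\widetilde{U}\cap\N\subset U^*$. Since also $U^*\subset\widetilde{U}$, this gives $U^*\cap\N=\widetilde{U}\cap\N$, which is cofinal in $\N$ by Lemma \ref{D A nondecreasing}. Restricting $\alpha$ to $U^*$ therefore yields a definable map $\alpha:U^*\rightarrow G^+$ whose restriction to $U^*\cap\N$ has image contained in $\N$.

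Now I would apply Lemma \ref{f nondecreasing} with $f=\alpha$ and $U=U^*$: the three hypotheses ($\alpha:U^*\to G^+$ definable, $U^*\cap\N$ cofinal in $\N$, and $\alpha(U^*\cap\N)\subset\N$) are exactly the facts just checked. The lemma then produces a definable set $U\subset U^*\subset\widetilde{U}$ such that $U\cap\N$ is cofinal in $\N$ and $\alpha$ is nondecreasing on $U$, which is the desired conclusion. I do not expect a genuine obstacle here: the whole content of the argument is the bookkeeping needed to discard the value $-1$ so that the previously established Lemma \ref{f nondecreasing} applies verbatim, in complete analogy with the way Lemma \ref{D A nondecreasing} was obtained from Lemma \ref{function D A} together with Lemma \ref{f nondecreasing}.
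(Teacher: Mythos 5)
Your proof is correct and follows exactly the paper's route: the paper's own proof of this lemma is simply ``By Lemma \ref{f nondecreasing}.'' Your additional bookkeeping step of restricting to the definable set where $\alpha\not=-1$ (and noting this does not shrink $\widetilde{U}\cap\N$, by Lemma \ref{d constant}) is a detail the paper leaves implicit, and you handle it correctly.
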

\begin{proof}
By Lemma \ref{f nondecreasing}.
\end{proof}

From now, we will assume that $\alpha$ is restricted to $U$. We set $U_0:=U\cap \N$.

\begin{lemma}\label{loc definable} If $Im\ \alpha$, $\widetilde{U}$, $Im\ D$ and $Im\ A$ are nonexpanding, then there is $g\in G^+\cup\{\infty\}\setminus \N$ and $N\in \N$ such that $X\cap [N,g]$  is a finite union of coset of $dG$ intersected with $[N,g]$ (so is $\mathcal{L}_{Pres}$-definable).
\end{lemma}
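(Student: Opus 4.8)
\medskip
\noindent The plan is to show that $X$ is $d$-periodic on a final segment and then to transport this periodicity from the standard integers into a genuinely infinite initial interval by means of DC. Recall that for each $n\in U_0$ the definition of $\alpha$ gives $X[\alpha(n),\alpha(n)+n]=X[\alpha(n)+d,\alpha(n)+n+d]$, i.e. (Lemma~\ref{explicit X[a,b]}) the equivalence $x\in X\Leftrightarrow x+d\in X$ holds throughout the window $[\alpha(n),\alpha(n)+n]$, and $\alpha(n)\in\N$ by the remark following Definition~\ref{alpha}. (The hypotheses that $\widetilde U$, $Im\ D$ and $Im\ A$ are nonexpanding are what make the constant $d$ of Lemma~\ref{d constant}, and hence $\alpha$, available; the present argument uses in addition that $Im\ \alpha$ is nonexpanding.) My aim is to chain these windows into a single one of the form $[N,M]$ with $N\in\N$ fixed and $M$ arbitrarily large.

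\medskip
\noindent First I would bound the jumps of $\alpha$. Since $Im\ \alpha$ is nonexpanding, Lemma~\ref{non expanding set}, together with the fact that $\alpha$ is nondecreasing on $U$ (so that for consecutive $n<n'$ in $U_0$ the value $\alpha(n')$ is the successor of $\alpha(n)$ in $Im\ \alpha$ whenever they differ), yields $w\in\N$ with $0\le\alpha(n')-\alpha(n)\le w$ for every $n\in U_0$ whose successor in $U_0$ is $n'$; this is exactly the device used in the proof of Lemma~\ref{d constant}. Now fix $n_0\in U_0$ with $n_0\ge w$ and put $N:=\alpha(n_0)\in\N$. For $n\ge n_0$ in $U_0$ one has $\alpha(n')\le\alpha(n)+w\le\alpha(n)+n$, so the window of $n'$ starts inside the window of $n$; hence the two overlap and, the periodicity condition being pointwise, it propagates to their union $[\alpha(n),\alpha(n')+n']$ (Lemma~\ref{increase}). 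Chaining along $U_0$ gives $d$-periodicity on $[N,\alpha(n)+n]$ for all $n\ge n_0$ in $U_0$, and since $\alpha(n)+n\to\infty$ along the cofinal set $U_0$ every standard $M\ge N$ is eventually covered. Thus for every standard $x\ge N$ I obtain $x\in X\Leftrightarrow x+d\in X$.

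\medskip
\noindent Finally I would bootstrap to the nonstandard range. Set $R:=X\cap[N,N+d)$ (a standard finite set) and $P:=\{x\ge N:\ \exists r\in R,\ x\equiv r\ \bmod\ d\}$, a finite union of cosets of $dG$ intersected with $[N,\infty)$, hence $\mathcal{L}_{Pres}$-definable. Iterating the equivalence above shows that $X$ and $P$ agree at every standard point of $[N,\infty)$, so the definable set $W:=(X\triangle P)\cap[N,\infty)$ contains no standard element. If $W=\varnothing$ I take $g=\infty$. Otherwise $W$ is nonempty and bounded below by $N$; applying DC to the (definable, bounded above) set of lower bounds of $W$ gives a finite supremum $m$, and discreteness of the order forces $m\in W$, so $m=\min W$. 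As $W$ meets no standard integer, $m\notin\N$, whence $g:=m-1\in G^+\setminus\N$. Since $[N,m)$ avoids $W$ I conclude $X\cap[N,g]=P\cap[N,g]$, which is a finite union of cosets of $dG$ intersected with $[N,g]$, as required.

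\medskip
\noindent I expect the main obstacle to be the middle step: organising the local periodicity windows $[\alpha(n),\alpha(n)+n]$ into periodicity on a single final segment. This needs both that consecutive windows overlap---forcing the choice $n_0\ge w$ and so using crucially that $Im\ \alpha$ is nonexpanding---and that their right endpoints are cofinal, so that no standard integer above $N$ escapes. The secondary subtlety is guaranteeing that the $g$ delivered by DC is genuinely nonstandard, which is exactly why one records that $W$ avoids the standard part.
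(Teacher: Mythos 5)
Your proposal is correct and follows essentially the same route as the paper: bound the jumps of $\alpha$ via Lemma \ref{non expanding set}, chain the windows $[\alpha(n),\alpha(n)+n]$ to get $d$-periodicity of $X$ on the standard part of $[N,\infty)$, and then invoke DC to locate the first (necessarily nonstandard) failure of the coset description. The only cosmetic differences are that the paper covers standard points by a two-case argument rather than your overlap chaining, and applies DC directly to the set $Y$ of points below which the coset description holds rather than to the lower bounds of the symmetric difference $W$; these are equivalent.
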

\begin{proof}
By Lemma \ref{non expanding set}, there is $v$ such that for all $n$, $0\leq \alpha(\sigma(n))-\alpha(n)\leq v$. So for all $n\geq l:=\max\{d,v\}$, $\alpha(\sigma(n))\leq \alpha(n)+v\leq \alpha(n)+n$.
Set $N= \alpha(l)$. 
\par We have that for all $k\in \N$, $k\geq N$, there is $n\in U_0$, $n\geq l$ such that $k\in [\alpha(n),\alpha(n)+n]$. For there are two cases: first if there is $n\in U_0$ such that $\alpha(n)\leq k\leq\alpha(\sigma(n))$. In that case $k\in [\alpha(n), \alpha(\sigma(n))]\subseteq [\alpha(n),\alpha(n)+n]$ and we are done. Second if for all $n\in U_0$ $\alpha(n)<k$. In that case by Lemma \ref{alpha nondecreasing}, $\alpha$ is non decreasing. Furthermore, $\alpha(U_0)\subset \N$ (by Lemma \ref{d constant}). So as it is bounded by $k$, $\alpha$ is eventually constant in $U_0$ i.e., there is $M\in \N$ such that for all $n\in U_0$, $n>M$, $\alpha(n)=\alpha(\sigma(n))$. Let $n\in U_0$ such that $n>\max\{M, k\}$ (such $n$ exists as $U_0$ is cofinal in $\N$ see Lemma \ref{alpha nondecreasing}). Then, $\alpha(n)=\alpha(M)<k<\alpha(M)+n=\alpha(n)+n$.
\par Let $x\in [N,\infty)\cap \N$. By the above argument we know that there is $n$ such that $x\in [\alpha(n),\alpha(n)+n]$. So by definition of $\alpha$, $X[\alpha(n),\alpha(n)+n]=X[\alpha(n)+d,\alpha(n)+n+d]$. By Lemma \ref{explicit X[a,b]}, this implies that $x\in X$ iff $x+d\in X$. Therefore $X\cap [N,\infty)\cap \N= (X\cap [\alpha(l),\alpha(l)+d-1])+d\N$. Take $\{a_1,\cdots, a_k\}$ be the set $X\cap [\alpha(l),\alpha(l)+d-1]$. Then $X\cap [N,\infty)\cap \N=\bigcup_i (a_i+dG)\cap [N,\infty)\cap \N$.
\par Let $Y=\{x\in G^+: x\geq N \wedge \forall y\leq x,\ N\leq y\rightarrow  (y\in X\rightleftarrow \vee_i y\equiv_d a_i)\}$. This is a $\mathcal{L}$-definable set. Therefore by DC there $g\in G^+\cup\{\infty\}$ such that $g\in Y$ and either $g=\infty$ or $g+1\notin Y$. So we get that $X\cap [N,g)=\bigcup_i (a_i+dG)\cap [N,g)$ and $g$ is maximal for this property. As $X\cap [N,\infty)\cap \N=\bigcup_i (a_i+dG)\cap [N,\infty)\cap \N$, $g\notin \N$.

\end{proof}

We can now prove the generalisation of the result of Michaux-Villemaire.
\begin{theorem} Let $G\equiv \Z$ and $\mathcal{L}\supseteq \mathcal{L}_{Pres}$. Assume that $Th(G,\mathcal{L})$ admits EP and DC. Then for all $X\subset G$ definable. $X\cap \N$ is a finite union of points and of cosets of $d\N$ for some $d\in \N$.
\end{theorem}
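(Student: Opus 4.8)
The plan is to assemble the final theorem almost entirely from the machinery already developed, so the work is organizational rather than computational. The target is to show that $X\cap\N$ is a finite union of points together with cosets of $d\N$ for a single integer $d$. The heavy lifting is done by Lemma \ref{loc definable}, which produces (under the nonexpanding hypotheses) an integer $N$ and an element $g\in G^+\cup\{\infty\}\setminus\N$ together with finitely many residues $a_1,\dots,a_k$ such that $X\cap[N,\infty)\cap\N=\bigcup_i(a_i+dG)\cap[N,\infty)\cap\N$. Intersecting this with $\N$ immediately gives that the "tail" $X\cap[N,\infty)\cap\N$ is a finite union of cosets of $d\N$. What remains is to handle the finite initial segment $X\cap[0,N)\cap\N$, which is a finite set of points, and to combine the two pieces.

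First I would verify that the hypotheses of Lemma \ref{loc definable} are genuinely available. That lemma is stated conditionally: it requires that $\widetilde U$, $Im\,\alpha$, $Im\,D$ and $Im\,A$ all be nonexpanding. By Proposition \ref{EP implies no expanding set}, the standing assumption that $Th(G,\mathcal L)$ satisfies EP guarantees that \emph{no} definable set is expanding; since each of these four sets is definable (using Lemma \ref{DA definable} and Definition \ref{alpha}, and noting $\widetilde U$ is definable by Lemma \ref{D A nondecreasing}), all four are automatically nonexpanding. This is the step I would want to state carefully, because it is what discharges the conditional hypotheses of Lemmas \ref{d constant} and \ref{loc definable} and lets the whole chain fire. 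Likewise the nonexpanding hypothesis of Lemma \ref{d constant} is met, so a single $d\in\N_0$ exists and is the $d$ feeding Definition \ref{alpha}.

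Next I would carry out the decomposition. Setting $F=X\cap[0,N)\cap\N$, this is a finite subset of $\N$ (a finite union of points). From Lemma \ref{loc definable}, $X\cap[N,\infty)\cap\N=\bigcup_{i=1}^k(a_i+dG)\cap[N,\infty)\cap\N$; intersecting with $\N$ and absorbing the bounded correction at the bottom, each $(a_i+dG)\cap[N,\infty)\cap\N$ is, up to finitely many points, a coset $a_i'+d\N$ for a suitable representative $a_i'\in\N$. Thus
$$
X\cap\N=F\cup\bigcup_{i=1}^k\bigl(a_i'+d\N\bigr),
$$
a finite union of points and cosets of $d\N$, which is exactly the claim.

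The main obstacle, and the only place demanding care, is the bookkeeping at the boundary $N$: the sets $(a_i+dG)\cap[N,\infty)$ are cosets of $dG$ in $G$, but the theorem asks for cosets of $d\N$ inside $\N$, so I must check that restricting to $\N$ and shifting the base point from $N$ down to the least nonnegative representative only alters things by finitely many points, which get absorbed into the finite union of points. This is routine but is the step where one could accidentally conflate $dG$ with $d\N$ or forget that $g$ may be infinite (in which case the coset description holds all the way up, and when $g$ is finite the residual segment $[g,\infty)$ does not arise because $g\notin\N$ forces the coset pattern to persist on all standard integers). I would therefore phrase the final assembly so that it reads off $X\cap\N$ directly from the equality $X\cap[N,\infty)\cap\N=\bigcup_i(a_i+dG)\cap[N,\infty)\cap\N$ established in Lemma \ref{loc definable}, rather than re-deriving anything.
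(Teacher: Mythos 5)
Your proposal is correct and follows exactly the paper's route: the paper's own proof simply invokes Proposition \ref{EP implies no expanding set} to discharge the nonexpanding hypotheses on $\widetilde{U}$, $Im\,\alpha$, $Im\,D$ and $Im\,A$, and then cites Lemma \ref{loc definable}. Your version merely makes explicit the boundary bookkeeping (the finite initial segment $X\cap[0,N)\cap\N$ and the passage from cosets of $dG$ to cosets of $d\N$) that the paper leaves implicit in the phrase ``immediate consequence.''
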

\begin{proof}
	By Proposition \ref{EP implies no expanding set}, $Im\ \alpha$, $\widetilde{U}$, $Im\ D$ and $Im\ A$ are nonexpanding. So the result is an immediate consequence of Lemma \ref{loc definable}.
\end{proof}

This property is also true for $X\cap x+\N$ for all $x\in G^+$.

\begin{prop}\label{relative local definability} For all $x\in X$ there is $N(x)\in \N$ minimal, $d(x)\in \N_0$ and $g(x)\in G^+\cup\{\infty\}\setminus \N$ maximal such that $[x+N(x), x+g(x)]\cap X$ is a finite union of classes of $d(x)G$ intersected with $[x+N(x), x+g(x)]$.
\end{prop}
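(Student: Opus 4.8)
The plan is to deduce the relative statement from the absolute one (Lemma~\ref{loc definable}) by running the whole development on a translate of $X$. Fix $x\in X$ and put
$$X_x:=\{y\in G^+:\ x+y\in X\},$$
which is an $\mathcal{L}$-definable subset of $G^+$ (over the parameter $x$). Since none of the development from Lemma~\ref{non expanding set} through Lemma~\ref{loc definable} used anything about $X$ beyond its definability and being contained in $G^+$, it applies verbatim with $X_x$ in place of $X$; the associated objects $\widetilde U$, $D$, $A$, $\alpha$ and their images are then definable, hence nonexpanding by Proposition~\ref{EP implies no expanding set}. Thus Lemma~\ref{loc definable} yields $N\in\N$, $d\in\N_0$ and $g\in G^+\cup\{\infty\}\setminus\N$ with $X_x\cap[N,g]=\bigcup_i(c_i+dG)\cap[N,g]$ for suitable residues $c_i$.

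Translating back by $x$ gives existence: since $y\in X_x\Leftrightarrow x+y\in X$, we have
$$X\cap[x+N,x+g]=x+\big(X_x\cap[N,g]\big)=\bigcup_i(x+c_i+dG)\cap[x+N,x+g],$$
and each $x+c_i+dG$ is again a coset of $dG$. Hence $[x+N,x+g]\cap X$ is a finite union of cosets of $dG$ intersected with that interval.

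It remains to impose minimality of $N$ and maximality of $g$. Let $S$ be the set of $M\in\N$ for which there are $d'\in\N_0$ and $h\in G^+\cup\{\infty\}\setminus\N$ such that $[x+M,x+h]\cap X$ is a finite union of cosets of $d'G$ intersected with $[x+M,x+h]$. By the previous paragraph $S\neq\varnothing$, so by the well-ordering of $\N$ it has a least element $N(x)$; fix a witnessing period $d(x)\in\N_0$, residues $a_1,\dots,a_k$ (so that $[x+N(x),x+h]\cap X=\bigcup_i(a_i+d(x)G)\cap[x+N(x),x+h]$) and endpoint $h\notin\N$. As at the end of Lemma~\ref{loc definable}, the set
$$Y:=\{z\geq x+N(x):\ \forall y\,(x+N(x)\leq y\leq z\rightarrow(y\in X\rightleftarrow \bigvee_i y\equiv_{d(x)}a_i))\}$$
is $\mathcal{L}$-definable, so by DC it has a supremum $g(x)\in G^+\cup\{\infty\}$, which is maximal with the required periodicity. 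Finally $g(x)\notin\N$: since $h\notin\N$, we have $x+n\in[x+N(x),x+h]$ for every standard $n\geq N(x)$, so the periodicity holds on $[x+N(x),x+n]$ and hence $x+n\in Y$; thus $g(x)=\sup Y\geq x+n$ for all such $n$, which is impossible for a standard $g(x)$.

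The translation step is routine once one observes that $X_x$ is definable and that the earlier lemmas were never special to $X$. The part needing care is the last paragraph: coordinating the minimal starting point $N(x)$ with a single consistent period $d(x)$ and residue set, and re-deriving that the maximal $g(x)$ delivered by DC is nonstandard --- now measured in the shifted frame $x+\N$ rather than in $\N$ itself. That bookkeeping is where I expect the only real friction.
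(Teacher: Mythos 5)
Your proof is correct and is essentially the paper's own argument: the paper likewise deduces the proposition by applying Lemma \ref{loc definable} to the translate $X_x=\{y-x:\ y\in X\}$, with Proposition \ref{EP implies no expanding set} supplying the nonexpanding hypotheses on $Im\ \alpha$, $\widetilde{U}$, $Im\ D$ and $Im\ A$. Your additional bookkeeping for the minimality of $N(x)$ and the maximality of $g(x)$ (which the paper leaves implicit) is sound, up to the small normalization that $g(x)$ should be the offset $\sup Y - x$ rather than $\sup Y$ itself, so that the interval reads $[x+N(x),x+g(x)]$ and $g(x)\notin\N$ as stated.
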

\begin{proof}By Lemma \ref{loc definable} applied with the set $X_x=\{y-x: y\in X\}$. For by Proposition \ref{EP implies no expanding set}, $Im\ \alpha$, $\widetilde{U}$, $Im\ D$ and $Im\ A$ are non expanding.
\end{proof}
\begin{Remark} We do not claim nor need that $N(x),d(x)$ or $g(x)$ are definable functions.
\end{Remark}

\begin{definition}
$$
\begin{array}{rl}
M:&G^+\times G^+ \rightarrow G^+\cup \{\infty,-1\}:\\
& (x,n)\longmapsto
\left\{\begin{array}{ll}
\max\{h\geq x+n:\ X[x,h-n]=X[x+n,h]\} & \mbox{if such $h$ exists in $G^+\cup \{\infty\}$;}\\
-1 & \mbox{ otherwise.}
\end{array}\right.
\end{array}
$$
\end{definition}

\begin{lemma}\label{bound on classes} Let $x\in X$ and $N=N(x),d=d(x),g=g(x)$ as given in Proposition \ref{relative local definability}. Then for all $x+N\leq y\leq x+N+\N$, if $n$ is finite and $d$ divides $n$ then $M(y,n)=x+g$. If $x\leq t<x+N(x)$ then $M(t,n)<x+N+n$.  Furthermore, for all $n$ finite, $M(x,n)\leq x+g$.
\end{lemma}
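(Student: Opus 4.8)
The plan is to read $M(x,n)$ through Lemma \ref{explicit X[a,b]}: by definition $M(x,n)$ is the largest $h\in G^+\cup\{\infty\}$ with $X[x,h-n]=X[x+n,h]$, and by Lemma \ref{explicit X[a,b]} this equality says exactly that $X$ is invariant under translation by $n$ on $[x,h-n]$, i.e. $t\in X\Leftrightarrow t+n\in X$ for all $x\leq t\leq h-n$. So $M(x,n)$ records how far the period-$n$ invariance of $X$ persists from $x$ onward. I would then isolate the three facts coming from Proposition \ref{relative local definability}: on $[x+N,x+g]$ the set $X$ agrees with a fixed union of cosets $\bigcup_i(a_i+dG)$; the maximality of $g$ forces this coset pattern to break at $x+g+1$ (membership of $x+g+1$ is the opposite of its prediction modulo $d$); and, since $g\notin\N$ is infinite, $x+g$ exceeds $x$ plus any standard natural, which lets me place auxiliary points such as $x+g+1-n$ safely inside $[x+N,x+g]$.

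For the first assertion ($y\in x+N+\N$ and $d\mid n$) I would prove $M(y,n)=x+g$ by two inequalities. For $M(y,n)\geq x+g$: if $y\leq t\leq x+g-n$ then $t$ and $t+n$ both lie in $[x+N,x+g]$ and $t\equiv t+n\pmod d$ since $d\mid n$, so $t\in X\Leftrightarrow t+n\in X$, which by Lemma \ref{explicit X[a,b]} gives $X[y,x+g-n]=X[y+n,x+g]$. For $M(y,n)\leq x+g$ (when $g\neq\infty$): were invariance to reach $x+g+1$, then at $t=x+g+1-n\in[x+N,x+g]$ we would have $x+g+1-n\in X\Leftrightarrow x+g+1\in X$, and since $d\mid n$ the coset pattern at $x+g+1-n$ would predict membership of $x+g+1$ correctly, contradicting the break at $x+g+1$. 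When $g=\infty$ invariance never fails and $M(y,n)=\infty$.

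The third assertion, $M(x,n)\leq x+g$ for every finite $n$, needs an extra idea when $d\nmid n$. Suppose $M(x,n)>x+g$. Applying the invariance across the whole nonstandard-length subinterval $[x+N,x+g-n]$ forces the residue set $A=\{a_i\bmod d\}$ to be invariant under $+n$, hence under $+e$ with $e=\gcd(n,d)$; so $X$ on $[x+N,x+g]$ is also a union of cosets of $eG$, and the mod-$e$ and mod-$d$ descriptions agree on the membership of every residue, in particular at $x+g+1$. Invariance at $x+g+1-n$ together with $e\mid n$ then predicts $x+g+1$ correctly modulo $e$, hence modulo $d$, again contradicting the break at $x+g+1$.

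Finally, for $M(t,n)<x+N+n$ when $x\leq t<x+N$ (again with $d\mid n$): assume $M(t,n)\geq x+N+n$, so invariance holds on $[t,x+N]$; splicing this with the good-region invariance from the first assertion gives period-$n$ invariance on all of $[t,x+g-n]$, so $X\cap[t,x+g]$ is $n$-periodic. Pushing each point of $[t,x+N)$ forward by a multiple of $n$ into $[x+N,x+g]$ and using $d\mid n$ identifies $X\cap[t,x+g]$ with the very same union of cosets of $dG$, now beginning at $t<x+N$, contradicting the minimality of $N$ in Proposition \ref{relative local definability}. I expect this to be the main obstacle: the minimality of $N$ is entangled with the modulus $d$, so the splicing across $x+N$, the leftward propagation of the coset pattern, and the reduction of general $n$ to the $\gcd$ refinement above are where the argument has to be made most carefully.
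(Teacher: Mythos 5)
Your proposal is correct, and for the first two assertions it is essentially the paper's own argument: $M(y,n)\geq x+g$ follows from the coset description of $X$ on $[x+N,x+g]$ together with $d\mid n$ (your direct residue computation is cleaner than the paper's bookkeeping with representatives $b_i$ and the maximal $r$ with $x+N+rd\leq y$, but it is the same idea); the upper bound follows because invariance reaching $x+g+1$ would extend the coset pattern to $[x+N,x+g+1]$, contradicting the maximality of $g$; and $M(t,n)<x+N+n$ follows by propagating the pattern below $x+N$ and contradicting the minimality of $N$ (the paper extends by the single point $x+N-1$, you push the whole block $[t,x+N)$ forward -- a cosmetic difference). The genuine divergence is in the third assertion. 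The paper argues: if $h=M(x,n)>x+g$, then by Lemma \ref{increase} the $n$-invariance restricts to $[x+N,x+g-n]$, from which it asserts that ``by Proposition \ref{relative local definability}, $d$ divides $n$,'' and then invokes part 1 with $y=x+N$ to get $M(x+N,n)=x+g<h\leq M(x+N,n)$, a contradiction. You instead extract from the invariance only what it actually yields -- the residue set is invariant under $e=\gcd(n,d)$ -- and rerun the break-at-$x+g+1$ argument modulo $e$, using that the mod-$e$ and mod-$d$ patterns define the same subset (two $d$-periodic sets agreeing on an infinite interval agree everywhere). This buys something real: $n$-invariance of a union of cosets of $dG$ on an infinite interval does \emph{not} imply $d\mid n$ (take $d=4$, residues $\{0,2\}$, $n=2$), so the paper's divisibility step tacitly assumes $d$ is chosen minimal, a normalization that Proposition \ref{relative local definability} does not state (it asserts minimality of $N$ and maximality of $g$ only). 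Your gcd refinement proves the lemma without that hidden assumption, at the cost of a slightly longer case analysis; the paper's version is shorter but is only valid after one fixes $d$ to be the minimal modulus.
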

\begin{proof}
\par 1) First we prove that $M(y,n)=x+g$: By Proposition \ref{relative local definability} and definition of $X[x+N,x+g]$, $X[x+N,x+g]=\bigcup_{i<s}b_i+dG\cap [0,g-x]$ where $b_i$ is a representative for the classe of $a_i-x$. As $y\in x+N+\N$, there is $r\in \N$ maximal such that $x+N+rd\leq y$. So, 
$$\begin{array}{rll}
		[y,y+(d-1)]\cap X	&= & [x+N+rd+j,x+N+rd+(d-1)]\\
											&  & \bigcup[x+N+(r+1)d, x+N+(r+1)d+(j-1)]\cap X \mbox{ for some $j<d$}\\
											&= & \{a'_i+dr_i:\ i\leq s\}\\
											& & \mbox{ where  $r_i$ is either $r$ or $r+1$ and $a'_i\equiv_d b_i\equiv_d a_i$}
	\end{array}
$$
Therefore by the above description of $X[x,g]$, $[y,x+g]\cap X=(\{a'_i+dr_i:\ i<s\}+dG^+)\cap [y,x+g]$. So we remark that by definition of $X[y,x+g-n]$ (resp. $X[y+n, x+g]$)
$$X[y,x+g-n]=(X[y,y+(d-1)]+dG^+)\cap [0,x+g-n-y]\mbox{ and }$$ 
$$X[y+n,x+g]=(X[y+n,y+n+(d-1)]+dG^+)\cap [0,x+g-n-y].$$
Also by definition of $X[y,y+(d-1)]$, $t\in X[y,y+(d-1)]$ iff $y+t=a_i+dr_i$ for some $i$. This means that $X[y,y+(d-1)]=\{a'_i+dr_i-y,\ i<s\}$. Similarly $X[y+n,y+n+(d-1)]=\{a'_i+dr_i-y,\ i<s\}$.
This implies that $X[y,x+g-n]=X[y+n,x+g]$ i.e., $M(y,n)\geq x+g$. If $g=\infty$ we are done by definition of $M$. Otherwise assume that $M(y,n)>x+g$ i.e., assume that there is $g'>g$ $X[y,x+g'-n]=X[y+n,x+g']$.  By Lemma \ref{explicit X[a,b]}, $x+g+1-n\in X$ iff $x+g+1\in X$. So (as $d$ divides $n$) $[N+x,x+g+1]\cap X=\bigcup_{i<s}a_i+ dG\cap [N+x,x+g+1]$. This contradicts the maximality of $g$ in Proposition \ref{relative local definability}. So $M(y,n)=x+g$.
\par 2) $M(t,n)<x+N+n$: Assume that  $h:=M(t,n)\geq x+N+n$. By definition of $M(t,n)$, $X[t,h-n]=X[t+n,h]$. By Lemma \ref{increase} $X[x+N-1,h-n]=X[x+N(x)-1+n,h]$. So by Lemma \ref{explicit X[a,b]}, $x+N-1\in X$ iff $x+N-1+n\in X$. Then by Proposition \ref{relative local definability} $[x+N-1,x+g(x)]\cap X=\bigcup_i (a_i+dG)\cap[x+N-1,x+g(x)]$. This equality contradicts the minimality of $N$ in Proposition \ref{relative local definability}. 
\par 3) $M(x,n)\leq x+g$: Assume $h=M(x,n)>x+g$. By definition of $M(x,n)$, $X[x,h-n]=X[x+n,h]$. So by Lemma \ref{increase} $X[x+N,h-n]=X[x+N+n,h]$. By definition of $M(x+N,n)$ we get that $M(x+N,n)\geq h$. On the other hand by Lemma \ref{increase} again $X[x+N,x+g-n]=X[x+N+n,x+g]$. Then by Lemma \ref{explicit X[a,b]}, for all $x+N\leq y\leq x+g-n$, $y\in X$ iff $y+n\in X$. By Proposition \ref{relative local definability} this proves that $d$ divides $n$. Now as $M(x+N,n)\geq h>x+g$, we get a contradiction with step 1). So, $h=M(x,n)\leq x+g$.
\end{proof}

\begin{prop}\label{X Lpres-def}
$X$ is $\mathcal{L}_{Pres}$-definable.
\end{prop}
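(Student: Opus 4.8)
The plan is to show that $X$ is a finite union of sets of the form $I\cap(a+dG)$, where $I$ is an interval and $a+dG$ a coset of $dG$; since each such set is $\mathcal{L}_{Pres}$-definable and we have reduced to the case $X\subseteq G^+$, this yields the proposition. Proposition \ref{relative local definability} already shows that every $x\in X$ lies in a maximal periodic block $[x+N(x),x+g(x)]$ on which $X$ is a finite union of cosets of $d(x)G$; the decisive feature is that $g(x)\notin\N$, so each maximal block has nonstandard (hence infinite) length. Calling the right endpoints $x+g(x)$ of these maximal blocks the \emph{breakpoints} of $X$, the whole problem reduces to showing that there are only finitely many breakpoints.

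First I would record the definability input. Although $N(x),d(x),g(x)$ are not definable functions (see the Remark following Proposition \ref{relative local definability}), the function $M$ is definable, and Lemma \ref{bound on classes} shows precisely that $M$ reads off the right endpoint of the block through $x$: for $y$ in the periodic region and $n$ finite with $d(x)\mid n$ one has $M(y,n)=x+g(x)$. Fixing the common period $d$ supplied by Lemma \ref{d constant} (the same $d$ used in Lemma \ref{loc definable}), I would use $M(\cdot,d)$ together with DC to pin down the maximal right endpoints, and thereby exhibit the set $B$ of breakpoints as a definable subset of $G^+$.

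The finiteness of $B$ then comes from EP via the absence of expanding sets. Any two consecutive breakpoints are separated by a full maximal block, and such a block has infinite length because $g(x)\notin\N$ in Proposition \ref{relative local definability}; hence consecutive elements of $B$ differ by a nonstandard amount. Consequently, if $B$ were infinite then, since all but at most one element of $B$ has a successor in $B$ lying a nonstandard distance away, for every $n\in\N$ there would be infinitely many $g\in B$ with $g+1,\dots,g+n\notin B$; that is, $B$ would be an expanding definable set, contradicting Proposition \ref{EP implies no expanding set}. Therefore $B$ is finite.

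Finally, with breakpoints $g_0<\cdots<g_m$ (together with the trailing value $\infty$ when the last block is unbounded), I would partition $G^+$ into the initial standard segment below the first block, which is a finite set of points, and the finitely many maximal blocks; on each block $X$ is a finite union of cosets of the relevant $d_iG$ intersected with an interval, by Proposition \ref{relative local definability} (and Lemma \ref{loc definable} for the first block). Taking the union of these finitely many $\mathcal{L}_{Pres}$-definable pieces shows $X$ is $\mathcal{L}_{Pres}$-definable. I expect the main obstacle to be the second step: turning the non-uniform data $N(x),d(x),g(x)$ into a genuinely definable breakpoint set, which is exactly why everything must be routed through the single definable function $M$, the fixed period $d$ of Lemma \ref{d constant}, and DC.
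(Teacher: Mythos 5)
Your high-level strategy coincides with the paper's: cover $X$ by the maximal periodic blocks of Proposition \ref{relative local definability}, argue that infinitely many blocks would yield an expanding definable set contradicting Proposition \ref{EP implies no expanding set}, and finish with a finite union of $\mathcal{L}_{Pres}$-definable pieces. But there is a genuine gap at exactly the step you flag as the main obstacle: the definability of the breakpoint set $B$. You propose to detect right endpoints with $M(\cdot,d)$, where $d$ is the single modulus of Lemma \ref{d constant}. That $d$ only governs the block containing the standard integers. For an arbitrary block $[x+N(x),x+g(x)]$, Lemma \ref{bound on classes} gives $M(y,n)=x+g(x)$ only under the hypothesis that $d(x)$ divides $n$; nothing in Proposition \ref{relative local definability} bounds the periods $d(x)$ of the various blocks, and they need not divide your fixed $d$. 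When $d(x)$ does not divide $d$, the shift by $d$ does not preserve the coset structure of period $d(x)$, so $M(y,d)$ has nothing to do with $x+g(x)$, and $Im\ M(\cdot,d)$ simply misses those breakpoints. Since no standard integer is divisible by every possible period, no choice of a single standard modulus can repair this, and ``$b$ is the right endpoint of a maximal block of \emph{some} finite period'' is not a first-order condition; DC does not help either, since it only produces suprema of sets that are already definable.

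This is precisely the difficulty the paper's proof is engineered around: it constructs an explicit increasing sequence of adjacent blocks $x_n=x_{n-1}+g(x_{n-1})+1$, works with the whole family $M(\cdot,k!)$ (so that the period of any fixed block divides $k!$ for $k$ large), proves the claim that each endpoint $M(x_i+N(x_i),k!)$ is at infinite distance from its predecessor in $Im\ M(\cdot,k!)$, and then passes to an ultrapower $G^*$ in which the class $T$ of $(k!)_{k\in\N}$ is a single parameter divisible by every standard integer. In $G^*$ the set $Im\ M(\cdot,T)$ is genuinely definable, contains all the endpoints $M(y_i,T)$ by \L os Theorem, and is expanding; this definable set plays the role you wanted $B$ to play, with the contradiction reached in $G^*$ rather than in $G$. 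Note also that $Im\ M(\cdot,T)$ contains values other than breakpoints (for instance $M(t,T)$ for $t$ in the finite prefixes $[x_i,x_i+N(x_i))$), which is why the paper's claim must control \emph{all} of $Im\ M(\cdot,k!)$ below each endpoint and not merely the other endpoints; any repair of your argument that replaces $B$ by the image of a definable function will face the same issue. Once you add these two missing ingredients --- a modulus divisible by all standard periods, hence a nonstandard parameter in an elementary extension, and control of the full image of $M$ --- your argument becomes the paper's proof.
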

\begin{proof}
By Proposition \ref{relative local definability}, for all $x\in X$, there is $N(x), d(x)\in \N$ and $g(x)\in G^+\setminus \N$ maximal such that $X\cap [x+N(x),x+g(x)]$ is a union of classes of $d(x)G$ restricted to $[x+N(x),x+g(x)]$. So if there is $x_1,\cdots, x_k\in G^+$ such that $X\subseteq \bigcup_i [x_i,x+g(x_i)]$ we are done. Therefore for a contradiction we assume that for all $k\in \N$, for all $x_1,\cdots, x_k$ $X\not= \bigcup_i [x_i, x_i+g(x_i)]\cap X$.
\par For all $n\in\N$, there is $x_n$ such that for all $k\geq d(x_n)$, $M(x_n+N(x_n),k!)=x_n+g(x_n)$ and furthermore, if $n\not= n'$ there is $N$ such that for all $k\geq N$, $M(x_n+N(x_n),k!)\not=M(x_{n'}+N(x_{n'}),k!).$ Indeed, we can construct the $x_n$'s by induction: Take $x_0=0$. Then, by Lemma \ref{bound on classes} for all $k\geq d(0)$, $M(0+N(0),k!)=g(0)$. For all $n$, set $x_{n}:=x_{n-1}+g(x_{n-1})+1$. First remark that we may assume $x_n<\infty$. Otherwise, $x_{n-1}+g(x_{n-1})=\infty$. So, $G^+=\bigcup_{i<n}[x_n,x_n+g(x_n)]$. This contradicts our hypothesis. Now by Lemma \ref{bound on classes}, for all $k\geq d(x_n)$, $M(x_n+N(x_n),k!)=x_n+g(x_n)$. Also for all $k\geq \max\{d(x_i),\ i\leq n\}$,  $M(x_n+N(x_n),k!)=x_n+g(x_n)>x_n=x_{n-1}+g(x_{n-1})+1>x_i+g(x_i)=M(x_i+N(x_i),k!)$.
\begin{claim}For all $k\geq d(x_i)$, $M(x_i+N(x_i),k!)$ is not at finite distance from its predecessor in $Im\ M(\cdot, k!)$ (if any).
\end{claim}
Assume that the claim is true. Then we will build an elementary expansion of $G$ such that $Im\ M(\cdot, T)$ is an expanding set in this model (for some $T$). Then we get a contradiction with the exchange property (by Proposition \ref{EP implies no expanding set}) and the proposition is proved.
\par Let $G^*$ be an ultrapower of $G$ on a nonprincipal ultrafilter over $\N$. Let $T$ be the class of $(n!)_{(n\in \N)}$ and $y_i$ be the class of $(x_i+N(x_i))_{(n\in \N)}$ (the constant sequence). By construction of $x_i$ and \L os Theorem, if $i\not=j$, $M(y_i,T)\not= M(y_j,T)$. By the claim and \L os Theorem, for all $i,n\in \N$, $M(y_i,T)-n\notin Im\ M(\cdot, T)$ i.e., for all $i$, $M(y_i,T)$ is not at finite distance form its predecessor in $Im\ M(\cdot, T)$. This proves that $\{M(y_i,T),\ i\in \N\}$ is an infinite subset of $Im\ M(\cdot, T)$ and that each point in this set is not at finite distance from its predecessor in $Im\ M(\cdot, T)$. So, $Im\ M(\cdot, T)$ is an expanding set.

\par We give now a proof of the claim: By Lemma \ref{bound on classes} as $d(x_i)$ divides $k!$, $M(x_i+N(x_i),k!)=x_i+g(x_i)\not=-1$.  Let $t$ such that $M(t,k!)< M(x_i+N(x_i),k!)$. We have to prove that the distance between these two elements is not finite. There are three possible cases:
\par 1) if $x_i+N(x_i)<t$: in that case we may assume that $t< M(x_i+N(x_i), k!)-k!$. For if $t\geq M(x_i+N(x_i), k!)-k!$ then $M(t,k!)\geq t+k!\geq M(x_i+N(x_i), k!)+k!-k!$: contradiction with the choice of $t$. By definition of $M$, $X[x_i+N(x_i),M(x_i+N(x_i),k!)-k!]=X[x_i+N(x_i)+k!,M(x_i+N(x_i),k!)]$. So by Lemma \ref{increase}, $X[t,M(x_i+N(x_i),k!)-k!]=X[t+k!,M(x_i+N(x_i),k!)]$. This proves that $M(t,k!)\geq M(x_i+N(x_i), k!)$: contradiction. So case 1) never occurs.
\par 2) $x_i\leq t<x_i+N(x_i)$: Then by Lemma \ref{bound on classes}, $M(t,k!)<x_i+N(x_i)+k!$. As $M(x_i+N(x_i),k!)=x_i+g(x_i)$, $N(x_i),k!\in \N$ and $g(x_i)\notin \N$, we are done.
\par 3) $t<x_i$: Assume that $M(t,k!)$ is a finite distance from $M(x_i,k!)$. By Lemma \ref{bound on classes}, $M(t+N(t),k!)\leq t+g(t)$ where $N(t), g(t)$ are given by Proposition \ref{relative local definability}. Then $[t+N(t), t+g(t)]\cap [x_i+N(x_i), x+g(x_i)]$ is an infinite interval. Let $r=\max\{x_i+N(x_i), t+N(t)\}$, $s=\min\{x_i+g(x_i), t+g(t)\}$. By Proposition \ref{relative local definability},  $X\cap [r,s]=\bigcup_i a_i+d(x_i)G\cap [r,s]$ for some $0\leq a_i<d(x_i)$ and $X\cap [r,s]=\bigcup_i b_i+d(t)G\cap [r,s]$ for some $0\leq b_i<d(t)$. So we may replace $d(t)$ by $e=gcd(d(x_i),d(t))$. Also, $t+g(t)=x+g(x)$ (by maximality of $g(x),g(t)$). Now $d(t)$ divides $k!$. Then by Lemma \ref{bound on classes} $M(t+N(t),k!)=t+g(t)=x+g(x)=M(x_i+N(x_i),k!)$: contradiction.

\end{proof}

\bibliographystyle{plain}
\bibliography{Biblio_p-adic}

\noindent Nathana\"el Mariaule\\
Universit\'e de Mons, Belgium\\
\emph{E-mail address: Nathanael.MARIAULE@umons.ac.be}
\end{document}